\numberwithin{equation}{section}
\newtheorem{Theorem}{Theorem}[section]
\newtheorem{Lemma}{Lemma}[section]
\theoremstyle{definition}
\theoremstyle{remark}
\newtheorem{Remark}{Remark}[section]
\author{ Pierre Degond}
\address{Pierre Degond, Universit\'e de Toulouse; UPS, INSA, UT1, UTM ;
Institut de Math\'ematiques de Toulouse ;
F-31062 Toulouse, France
\& CNRS; Institut de Math\'ematiques de Toulouse UMR 5219 ;
F-31062 Toulouse, France.}
 \curraddr{} \email{pierre.degond@math.univ-toulouse.fr}
\author{Hailiang Liu}
\address{Hailiang Liu, Department of Mathematics\\Iowa State University\\Ames, IA 50011.
} \curraddr{} \email{hliu@iastate.edu, Fax:(515)294-5454}
\title[]{Kinetic models for polymers with inertial effects}
\thanks{ }
\keywords{Polymers, kinetic description, Brownian forces,  Rod like
models, Dumbbell model}
\date{January 10, 2009}
\begin{document}
\begin{abstract}
Novel kinetic models for both Dumbbell-like and rigid-rod like
polymers are derived, based on the probability distribution function
$f(t, x, n, \dot n)$ for a polymer molecule positioned at $x$ to be
oriented along direction $n$ while embedded in a $\dot n$
environment created by inertial effects. It is shown that the
probability distribution function of the extended model, when
converging, will lead to well accepted kinetic models when inertial
effects are ignored such as the Doi models for rod like polymers,
and the Finitely Extensible Non-linear Elastic (FENE) models for
Dumbbell like polymers.
\end{abstract}
\maketitle
\tableofcontents

\section{Introduction}

In this paper we derive novel kinetic models for both Dumbbell like
and rigid-rod like polymers in the presence of inertial forces. The
model is to describe dynamics of the probability distribution
function embedded in high dimensional configuration space due to
inertial effects. We then prove that the limit equation of the new
model when inertial force vanishes leads to current models with no
inertial effects such as the FENE model and the Doi model,
respectively. This illustrates consistency of our kinetic models
with existing models.

The wide range of applications of polymer materials has attracted
new areas of academic and industrial research. The synthesis of
different type of polymers has enlarged the range of applications of
polymer materials to areas where mechanical properties are
important. Materials made up of macro-molecules such as polymers
display properties that completely differ from those made from
small molecules. The description of polymer dynamics is often based
on large assemblies of molecules, the characteristics could be modeled
in terms of their statistical properties.

Most polymers are long chains or branches of repeated chemical
units. The full description of each atom in the polymer by molecular
dynamics is not feasible for the huge computational effort. Coarse
grained models are often expected with macroscopic space and time
properties of complex fluids. Typical models such as bead-spring
chain for flexible polymers and the rigid rod model for liquid
crystalline polymers have been established by the pioneers in
polymer science.

In general, the flow modeling of polymers has to take into account
the internal structure, characterized by both positional and
orientational order of phases. Such incorporation is often done by
adding new balance equations to those that govern structure-less
Newtonian fluids. These new balances must be evaluated from the
behavior of polymers. According to the relative size of the bending
persistence size and the length of the polymer, two canonical types
of polymers are widely studied: the Dumbbell model and the rigid-rod
like model. In modeling motion of polymers, it is essential to
explore an accurate method of solutions of the Langevin equation for
particles undergoing Brownian movement (rotational or translational)
under the influence of external fields. A large number of reviews,
text books and monographs on the theory, applications and rheology
of polymeric materials have appeared in the literature, see e.g.
\cite{Fl:1953, Fl:1969, He:1976, Fe:1980, Doi:1986, DE:1986,
BCAH:1987, GP:1993, Oe:1996, La:1999, Rd:2002}.

There are three main levels of description of polymeric fluids:
atomistic modeling, kinetic modeling \cite{DE:1986, He:1976}, and
the macroscopic approach of continuum mechanics \cite{BCAH:1987}. We
shall exploit the kinetic approach. Models of kinetic theory provide
a coarse-grained description of molecular configurations wherein
atomistic processes are ignored altogether (Doi and Edwards
\cite{DE:1986}, Bird et al \cite{BCAH:1987}, and
Ottinger\cite{Oe:1996}). Kinetic theory models for polymer solutions
are most naturally exploited numerically by means of stochastic
simulation or Brownian dynamics methods \cite{Oe:1996}.  A kinetic
theory model when equipped with an expression relating stress to
molecular configurations plays an important role in developing
micro-macro methods of computational rheology \cite{OP:2002,
Ke:2004}. In current kinetic theory models for polymers, the inertia
of molecules is often neglected. However, neglect of inertia in some
cases leads to incorrect predictions of the behavior of polymers.
The forgoing considerations indicate that the inertial effects are
of importance in practical applications, e.g., for short time
characteristics of materials based on the relevant underlying
phenomena.

It is thus the goal of this paper to model dynamics of the density
distribution of polymers when the inertial force is no longer
ignorable. More precisely we shall be particularly interested in
modeling two canonical types of polymers: Dumbbell-like and rod-like
polymers, which when inertial forces are not considered have been
well understood. We first derive kinetic models including inertial
effects from particle dynamics (continuum limit in the Brownian
motion), we then show the limit of the augmented models when
inertial forces vanish leads to the inertia-free model.

We now summarize our main results for two types of polymers.
\subsection{Dumbbell-like polymers} A mcromolecule is idealized as an
`elastic dumbbell' consisting of two `beads' joined by a spring
which can be modeled by an end-to-end vector $n$. Here $n$ is in a
bounded ball $B(0, n_0)$, which means that the extensibility of the
polymers is finite. Let $f(t, x, n, p, q)$ denote the distribution
function of Dumbbell-like polymers on the space variables $x\in
\mathbb{R}^d$, $d=2, 3$, the translational velocity $p\in
\mathbb{R}^d$, the end-to-end vector $n$ as well as the
orientational velocity $q \in \mathbb{R}^d$. And $t$ is the time.
The novel kinetic model to be derived is
\begin{align}\label{fp2-}
\partial_t f +\nabla_x\cdot (pf) &+\nabla_n\cdot(qf)
+\nabla_p\cdot(-\frac{\zeta}{m}(p-u(x))f)\\ \notag
&+\nabla_q\cdot\left(\left(-\frac{\zeta}{m}(q-n\cdot \nabla_x u(x))
-\frac{2F}{m}\right)f\right)=\frac{2k_BT\zeta}{m^2}[\Delta_p f
+\Delta_q f],
\end{align}
where $\zeta$ is the frictional coefficient for the beads with mass
$m$, $k_B$ is the usual Boltzmann constant,  $T$ is the absolute
temperature, and $F$ is the spring force between beads. The force
usually derives from a potential, and has different forms for
different models. For the well known  FENE potential
$$
F=\frac{Hn}{1-n^2/n^2_0},
$$
where $H$ is a spring constant \cite{BCAH:1987}.

The above model under the scaling
$$ m=\epsilon^2, \quad \sqrt{m}p
\to p, \quad \sqrt{m}q \to q,
$$
leads to
\begin{align} \label{fe-}
\epsilon^2 \partial_t f & +\epsilon \nabla_x\cdot(pf)  + \epsilon
\nabla_n\cdot(qf) \notag \\
\quad & +\epsilon \nabla_p \cdot (\zeta u(x)f)+\epsilon
\nabla_q\cdot[(\zeta n\cdot \nabla_x u -2F)f]=Q(f),
\end{align}
where
$$
Q(f):=\zeta \nabla_p \cdot(pf +2k_BT \nabla_p f)+\zeta \nabla_q
\cdot(qf +2k_BT \nabla_q f).
$$

Our result for Dumbbell like polymers could thus read as follows:
\begin{Theorem}The limit $\epsilon\to 0$ of $f$ is given by $f^0=\rho M$
where $\rho=\rho(t, x, n) \geq 0$  and $M$ are given by
$$
\rho(t, x, n)=\int f^0(t, x, n, p, q)dpdq, \quad
M=\exp\left(-\frac{p^2+q^2}{4k_BT}\right).
$$
Furthermore, $\rho(t, x, n)$ satisfies the following kinetic
equation
\begin{equation}\label{fp1-}
    \partial_t \rho + \nabla_x\cdot(u(x)\rho) +\nabla_n \cdot ((n\cdot \nabla_x u-\frac{2F}{\zeta})\rho)=
\frac{2k_BT}{\zeta}\Delta_x \rho +\frac{2k_BT}{\zeta}\Delta_n \rho.
\end{equation}
\end{Theorem}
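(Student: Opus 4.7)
The plan is a standard Chapman--Enskog / Hilbert expansion in $\epsilon$, relying on the fact that the operator $Q$ in \eqref{fe-} relaxes the velocity variables $(p,q)$ toward the isotropic Gaussian $M$. Two structural observations about $Q$ drive the argument. First, writing $f = g\,M$, a direct computation using $\nabla_p M = -p M/(2k_BT)$ gives
\[
Q(gM) = 2k_B T\, \zeta\, \bigl[\nabla_p\!\cdot(M\nabla_p g) + \nabla_q\!\cdot(M\nabla_q g)\bigr],
\]
so the kernel of $Q$ (among functions of reasonable decay in $(p,q)$) consists exactly of functions $\rho(t,x,n)\,M$. Second, $\int Q(h)\,dp\,dq = 0$ for all admissible $h$ since $Q$ is a divergence in $(p,q)$; this supplies the solvability condition at higher orders.

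Next, I substitute $f = f_0 + \epsilon f_1 + \epsilon^2 f_2 + O(\epsilon^3)$ into \eqref{fe-} and match powers. At order $\epsilon^0$, $Q(f_0) = 0$ forces $f_0 = \rho(t,x,n)\, M$. At order $\epsilon^1$, denoting by $\mathcal{T}$ the transport part of \eqref{fe-}, the equation $\mathcal{T}(f_0) = Q(f_1)$ admits a solution of the form $f_1 = (A\cdot p + B\cdot q)M$; since $Q((A\cdot p)M) = -\zeta(A\cdot p)M$ by the factorisation above, matching the coefficients of $p$ and $q$ yields
\[
A = -\tfrac{1}{\zeta}\nabla_x \rho + \tfrac{1}{2k_B T}u\,\rho, \qquad
B = -\tfrac{1}{\zeta}\nabla_n \rho + \tfrac{1}{2k_B T}\bigl(n\cdot\nabla_x u - \tfrac{2F}{\zeta}\bigr)\rho.
\]
Solvability holds automatically because $\mathcal{T}(f_0)/M$ is odd in $(p,q)$.

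At order $\epsilon^2$ the equation reads $\partial_t f_0 + \mathcal{T}(f_1) = Q(f_2)$. Integrating over $dp\,dq$, the right-hand side and the two velocity-divergence terms in $\mathcal{T}(f_1)$ vanish, leaving
\[
\partial_t\rho \cdot \textstyle\int M\,dp\,dq + \nabla_x\!\cdot\!\int p\,f_1\,dp\,dq + \nabla_n\!\cdot\!\int q\,f_1\,dp\,dq = 0.
\]
The Gaussian identities $\int p_i p_j\, M\,dp\,dq = 2k_B T\,\delta_{ij}\!\int M\,dp\,dq$ together with $\int p_i q_j\, M\,dp\,dq = 0$ give $\int p\,f_1\,dp\,dq = 2k_B T\,A\!\int M\,dp\,dq$ and analogously for $B$. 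Substituting the expressions for $A,B$ and dividing through by $\int M\,dp\,dq$ reproduces exactly \eqref{fp1-}.

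The hardest part, strictly speaking, is not this algebra---which is a routine formal expansion---but the rigorous justification that $f^\epsilon \to \rho M$ as $\epsilon\to 0$ and that the error $f^\epsilon - f_0 - \epsilon f_1$ is controlled uniformly. Such a justification would typically require uniform-in-$\epsilon$ hypocoercivity-type estimates (since $Q$ is degenerate in $(x,n)$), together with care at the boundary $|n|=n_0$ where the FENE force $F$ is singular. At the formal kinetic-theory level adopted in the excerpt, however, the identification above is complete.
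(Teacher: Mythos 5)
Your proposal is correct and follows essentially the same route as the paper: identify the kernel of $Q$ as $\mathrm{span}\{M\}$, invert $Q$ on the order-$\epsilon$ transport of $\rho M$ (your $f_1=(A\cdot p+B\cdot q)M$ is exactly the paper's $g^0$ with its coefficients $a,b,c,d$), and close via the conservation property $\int Q\,dp\,dq=0$ together with the Gaussian moment identities. The only cosmetic difference is that you phrase it as a three-term Hilbert expansion with an explicit odd-parity solvability remark, where the paper writes $f^\epsilon=\rho^\epsilon M+\epsilon g^\epsilon$ and invokes Lax--Milgram for the invertibility of $Q$ on mean-zero data.
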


\subsection{Rod-like polymers}
Polymers are idealized as rods of fixed length. The orientation
space is $n \in \mathbb{S}^{d-1}$. Let $f(t, x, n, p, \omega)$
denote the distribution function of rod-like polymers on the space
variables $x\in \mathbb{R}^d$, the translational velocity $p\in
\mathbb{R}^d$, the orientational vector $n$ as well as the angular
velocity $\omega$. Here $\omega$ is on the tangent bundle
$T_n\mathbb{S}^{d-1}$. And $t$ is the time.  Our rescaled kinetic
model can be formulated as
\begin{align} \label{ske-}
\epsilon \partial_t f &+\nabla_x \cdot(pf)+ \mathcal{R}\cdot(\omega
f)+ \zeta_t
\nabla_p \cdot(u(x) f)\notag \\
\quad & + \nabla_\omega \cdot((\zeta_r n\times \nabla_xu \cdot
n-\mathcal{R}\cdot U) f)=\frac{1}{\epsilon}Q(f),
\end{align}
where
$$
Q(f)=\zeta_t \nabla_p\cdot(pf+k_BT \nabla_p f)+\zeta_r \nabla_\omega
\cdot(f\omega +k_BT \nabla_\omega f).
$$
Here $\epsilon$ denotes the inertial parameter similar to
(\ref{fe-}), $u$ is the fluid velocity, and $U$ is certain
interaction potential of rods. $\mathcal{R }=n\times \nabla_n$ is
the rotational gradient operator, and $k_B, T$ denote the Boltzmann
constant and absolute temperature, respectively. $\zeta_t, \zeta_r$
are frictional coefficients in $x$ and $n$ directions.

Our result for rod-like polymers then reads as follows:
\begin{Theorem}The formal limit $\epsilon \to 0$ of $f$ is given by $f^0=\rho M$
where $\rho=\rho(t, x, n) \geq 0$  and $M$ are given by
$$
\rho(t, x, n)=\int f^0(t, x, n, p, q)dpd_n\omega, \quad
M=\exp\left(-\frac{p^2+\omega^2}{2k_BT}\right).
$$
Furthermore, $\rho(t, x, n)$ satisfies the kinetic equation
\begin{equation}\label{fp4-}
  \partial_t \rho +\nabla_x\cdot(u(x)\rho) +\mathcal{R}\cdot(n\times \nabla_xu \cdot \rho)=D_t\Delta_x \rho
  +D_r \mathcal{R}\cdot \left[\mathcal{R}\rho +\frac{\rho}{k_B T}\mathcal{R}U \right]
\end{equation}
where
$$
D_t=\frac{k_BT}{\zeta_t}, \quad D_r=\frac{k_BT}{\zeta_r}.
$$
\end{Theorem}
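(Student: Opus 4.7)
\medskip

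\noindent\textbf{Proof proposal.} The plan is to perform a formal Hilbert expansion $f=f^0+\epsilon f^1+\epsilon^2 f^2+O(\epsilon^3)$ in the rescaled equation (\ref{ske-}) and to read off the limit equation from the solvability condition at order $\epsilon^1$. Collecting the $\epsilon^{-1}$ terms first gives $Q(f^0)=0$. Since $Q$ is the sum of two independent Ornstein--Uhlenbeck operators, one acting in $p\in\mathbb{R}^d$ and one in $\omega\in T_n\mathbb{S}^{d-1}$, its kernel consists exactly of Gaussian multiples $\rho(t,x,n)M$ with $M=\exp(-(p^2+\omega^2)/(2k_BT))$, which already yields the first claim of the theorem.

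At order $\epsilon^0$ the equation reads $Q(f^1)=\mathcal{T}f^0$, where $\mathcal{T}$ denotes the full transport operator on the left-hand side of (\ref{ske-}). Using $\nabla_pM=-(p/k_BT)M$ and $\nabla_\omega M=-(\omega/k_BT)M$ I would rewrite $\mathcal{T}f^0$ in the form $(p\cdot A+\omega\cdot B)M$ with
\[A=\nabla_x\rho-\frac{\zeta_t}{k_BT}u\rho,\qquad B=\mathcal{R}\rho-\frac{1}{k_BT}\bigl(\zeta_r\,n\times\nabla_xu\cdot n-\mathcal{R}U\bigr)\rho.\]
The key observation is that on such linear-velocity moments $Q$ acts as a multiplier: a direct calculation yields $Q((p\cdot\alpha)M)=-\zeta_t(p\cdot\alpha)M$ and $Q((\omega\cdot\beta)M)=-\zeta_r(\omega\cdot\beta)M$ for any $(t,x,n)$-dependent vectors $\alpha,\beta$. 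This inverts $Q$ explicitly and gives
\[f^1=-\frac{1}{\zeta_t}(p\cdot A)M-\frac{1}{\zeta_r}(\omega\cdot B)M\pmod{\ker Q}.\]
Because $\int pM\,dp=0$ and $\int\omega M\,d_n\omega=0$, any kernel component added to $f^1$ is invisible to the first moments used in the next step.

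At order $\epsilon^1$ the equation is $\partial_tf^0+\mathcal{T}f^1=Q(f^2)$; integrating against $1$ in $(p,\omega)$ kills $Q(f^2)$ and all velocity-divergence terms in $\mathcal{T}f^1$, leaving only
\[C\,\partial_t\rho+\nabla_x\cdot\int pf^1\,dp\,d_n\omega+\mathcal{R}\cdot\int\omega f^1\,dp\,d_n\omega=0,\]
with $C=\int M\,dp\,d_n\omega$. The Gaussian identity $\int p_ip_j\,M\,dp\,d_n\omega=k_BT\,C\,\delta_{ij}$, together with its analogue on $T_n\mathbb{S}^{d-1}$, then turns the two flux integrals into $C(u\rho-D_t\nabla_x\rho)$ and $C\bigl((n\times\nabla_xu\cdot n)\rho-D_r\mathcal{R}\rho-(D_r/k_BT)(\mathcal{R}U)\rho\bigr)$ respectively. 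Dividing by $C$ produces exactly (\ref{fp4-}).

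The main obstacle is the consistent treatment of the rotational variable: $\omega$ lives on the moving fiber $T_n\mathbb{S}^{d-1}$, so the operator $\mathcal{R}=n\times\nabla_n$, the Ornstein--Uhlenbeck operator in $\omega$, and the integration-by-parts identity $\int\mathcal{R}\cdot(\omega g)\,d_n\omega=\mathcal{R}\cdot\int\omega g\,d_n\omega$ all have to be set up compatibly with the surface measure $d_n\omega$; this geometric bookkeeping is what produces the drift contribution in the form $\mathcal{R}\cdot(\rho\mathcal{R}U/k_BT)$ in (\ref{fp4-}) rather than a naive Laplace--Beltrami term. Otherwise the argument is the standard diffusion limit for a doubly Ornstein--Uhlenbeck collision operator; making it rigorous rather than formal would additionally require uniform-in-$\epsilon$ bounds on $f^\epsilon$ and control of the remainder $f^\epsilon-f^0-\epsilon f^1$.
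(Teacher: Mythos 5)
Your proposal is correct and follows essentially the same route as the paper: identify $\ker Q$ as Gaussian multiples $\rho M$, solve the order-$\epsilon^0$ equation for the first correction (the paper's $g^0$ is your $f^1$, and your multiplier identity $Q((p\cdot\alpha)M)=-\zeta_t(p\cdot\alpha)M$ is exactly how the paper determines its coefficients $a=-pM/\zeta_t$, $b=-\omega M/\zeta_r$ after invoking Lax--Milgram for solvability), and close via the continuity equation and the Gaussian moment identity $\int M(\omega\cdot A)\omega\,dp\,d_n\omega=k_BT(Id-n\otimes n)A\int M$ on the tangent bundle. Your Hilbert-expansion phrasing versus the paper's decomposition $f^\epsilon=\rho^\epsilon M+\epsilon g^\epsilon$ is only a notational difference, and your flagged geometric caveat about $d_n\omega$ matches the paper's Lemmas 3.2--3.3 and its Lemma on the $\omega$-moments.
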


Our derivation of these kinetic models is based on establishing
motion laws of polymer molecules, followed by a conversion into the
kinetic description. The formal limit when inertia vanishes is
justified by taking the classical approach for hydrodynamic limits.
To this end a rescalling is adopted, so that the collision operator
is set on fast scale, and the dissipation of the collision operator
drives the states to the unique equilibrium states $M$.  Derivation
of models for Dumbbell-like polymers and the formal limit
justification are given in \S 3, and those for rod-like polymers are
given in \S 4. Some concluding remarks are presented in \S 5.

Finally, we wish to close this section by pointing to a vast body of
recent work on mathematical treatment of kinetic theory models for
polymers,  their constitutive models as well as their coupling with
fluid models (so called micro-macro models), see e.g.
\cite{Rm91,DLP02,ELZ02,CKT04,CKT04+,ELZ04,JLL04,Cp05,CTV05,
CV05,FS05,DLY05,LZZ05,EZ06,JLO06,ZZ06,CFTZ07,Li07,LLZ07,LM07,CM08,LL08,LL08+,LZ08,M08,OT08,
ZWFW05,ZZZ08} and references therein. These should  provide guidance
in establishing the various levels of mathematical theory of kinetic
models developed in this work.


\section{Kinetic models for Dumbbell polymers }
\subsection{Equations of motion with non-trivial inertia}

We consider a polymer consisting of two-beads connected by one-spring.
Each bead as a coarse grained particle represents several chemical units and experiences four
kinds of forces in the dilute case where there is no interaction for inter--and intra-dumbbells.

Assume the two beads are positioned at $x_1$ and $x_2$, the Langevin equation of the beads are just balance of different forces expressed as
\begin{align*}
m_1\ddot{x_1} &=-\zeta_1(\dot x_1 -u(x_1))+F_1 +\sigma_1 \dot W_1,\\
m_2\ddot{x_2} &=-\zeta_2(\dot x_2 -u(x_2))+F_2 +\sigma_2 \dot W_2.
\end{align*}
Here $W_i$ are independent standard Brownian motions, $\zeta_i$ are frictional coefficients for the $i$-th bead with mass $m_i$, $\sigma_i=\sqrt{2k_BT\zeta_i}$ by fluctuation-dissipation theorem, where $k_B$ is the Boltzmann
constant, and $T$ is the temperature. The spring forces $F_1=-F_2=F$ by Newton's Third Law
depends only on the end-to-end vector $n$.

A natural configuration for this underlying polymer includes both
position of the polymer with these two beads connected to one spring
$$
x=\frac{x_1+x_2}{2},
$$
and the end-to-end vector denoted by $n=x_2-x_1$. We assume that $m_1=m_2=m$ and $\zeta_1=\zeta_2=\zeta$. Then the difference  and average of the above two equations gives a coupled system
\begin{align}\label{2.1}
m\ddot {x} & =-\zeta(\dot x-u(x))+\sqrt{4k_BT\zeta}\dot W_1,\\ \label{2.2}
 m\ddot {n} &=-\zeta(\dot n-n \cdot \nabla_x u)-2F+\sqrt{4k_BT\zeta}\dot W_2.
\end{align}
The approximation utilizes the equation $u(x)\sim (u(x_1)+u(x_2))/2$, $u(x_2)-u(x_1) \sim \nabla_x u \cdot n$, and the two new Brownian motions
$
(W_2 \pm W_1)/\sqrt{2}
$
(still denoted by $W_i$).\footnote{Here we use the fact that for any $\alpha \in[0, 1]$, $\alpha W_1(t)\pm
\sqrt{1-\alpha^2}W_2(t)$ remains a Brownian motion.}

If the inertia were ignored for this two beads spring system, we
would have the following stochastic equations (SDE),
\begin{eqnarray}\label{xn1}
               \dot x & =&u(x) +\sqrt{\frac{4k_BT}{\zeta}}\dot
               W_1,\\ \label{xn2}
               \dot n & =&n\cdot \nabla_x u -\frac{2F}{\zeta}+\sqrt{\frac{4k_BT}{\zeta}}\dot W_2.
             \end{eqnarray}
The differential operator or the infinitesimal generator corresponding to the process $(x(t), n(t))$
is given by
\begin{equation}\label{L1}
L\equiv \langle u(x), \nabla_x\rangle + \langle n\cdot \nabla_x u
-\frac{2F}{\zeta}, \nabla_n\rangle +\frac{2k_BT}{\zeta}(\Delta_x
+\Delta_n),
\end{equation}
where
$$
\nabla_x=\left(\frac{\partial}{\partial_{x_1}}, \cdots, \frac{\partial}{\partial_{x_d}}\right), \quad
\nabla_n=\left(\frac{\partial}{\partial_{n_1}}, \cdots, \frac{\partial}{\partial_{n_d}}\right),
$$
with $\langle, \rangle$ denoting the Euclidean inner product in
$\mathbb{R}^d$. The configuration of Dumbbell like polymers can be
illustrated in Figure \ref{fig1}:
\begin{figure}[pth]
\begin{center}

\includegraphics[scale=0.4]{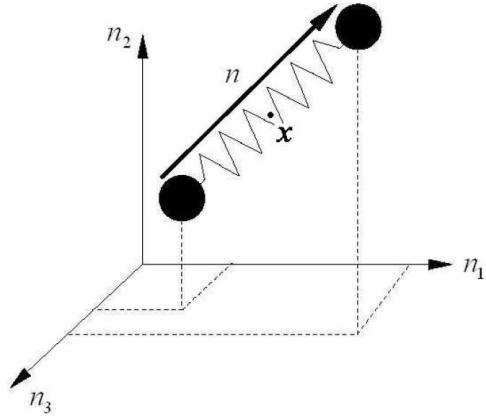}
\caption{A Dumbbell like polymer}\label{fig1}
\end{center}
\end{figure}

When inertial force is significant, one has to take the inertial effects into consideration.
We now rewrite the system (\ref{2.1})-(\ref{2.2}) into a first-order system
\begin{align*}
\dot x & =p,\\
\dot n &=q,\\
\dot p &=-\frac{\zeta}{m}(p-u(x))+\frac{\sqrt{4k_BT\zeta}}{m}\dot W_1,\\
\dot q &= -\frac{\zeta}{m}(q-n\cdot \nabla_x u(x)) -\frac{2F}{m}+
\frac{\sqrt{4k_BT\zeta}}{m}\dot W_2.
\end{align*}
This system can be considered as a degenerate stochastic differential equation with a singular diffusion matrix of $4d \times 4d$. The differential operator corresponding to the process $(x(t), n(t), p(t), q(t))$
is also uniquely defined. In fact for the above dissipative stochastic differential equation with well-defined initial data, a unique solution $(x, n, p, q)(t)$ up to an explosion time is ensured if  $u(x)$  and $F$  are smooth functions of the configuration variables.

\subsection{Kinetic description}
We now formulate the kinetic description of the above motion laws.
For a given instant $t$, the random variables $X=(x, n, p, q)$ 
can be characterized by a probability density function (PDF) $f(t, x, n; p, q)$ defined by
$$
f(t, x, n;p, q,)dX=P[\{X\leq \tilde X\leq X+dX\}].
$$
In words, the right-hand side is the probability that the random variable 
$\tilde X$ falls between the sample space values 
$X$ and $X+dX$ for different realizations of the polymer motion. Thus the above statistical ODE system can be converted into a PDE of the form
\begin{align}\label{fp2}
\partial_t f +\nabla_x\cdot (pf) &+\nabla_n\cdot(qf)
+\nabla_p\cdot\left(-\frac{\zeta}{m}(p-u(x))f \right)\\ \notag
&+\nabla_q\cdot\left(\left(-\frac{\zeta}{m}(q-n\cdot \nabla_x u(x))
-\frac{2F}{m}\right)f\right)=\frac{2k_BT\zeta}{m^2}[\Delta_p f
+\Delta_q f].
\end{align}
It is known that in the case with no inertial forces the kinetic
equation is also of the Fokker-Planck type. The model follows from
the motion law (\ref{xn1})-(\ref{xn2}) as follows:
\begin{equation}\label{fp1}
    \partial_t \rho + \nabla_x\cdot(u(x)\rho) +\nabla_n \cdot \left(\left(n\cdot \nabla_x u-\frac{2F}{\zeta}\right)\rho\right)=
\frac{2k_BT}{\zeta}\Delta_x \rho +\frac{2k_BT}{\zeta}\Delta_n \rho,
\end{equation}
where $\rho(t, x, n)$ denotes the corresponding probability density
function.

\begin{Remark}
The polymer scale is much smaller than the fluid scale, the
diffusion in space is often ignorable. Here we make no such
distinction.
\end{Remark}

It is natural to ask whether the effective dynamics in (\ref{fp2}) is dominated by (\ref{fp1}) when inertia vanishes. As a first step we shall identify an effective equation satisfied
by the limit of $f$ when $m \downarrow 0$, if the limit exists. In the remainder, we will always suppose that $f$ is as smooth as needed.
\subsection{Scaling}
We make the following scaling
$$
m=\epsilon^2, \quad \sqrt{m}p \to p, \quad \sqrt{m}q \to q,
$$
to obtain
\begin{align} \label{fe}
\epsilon^2 \partial_t f +\epsilon \nabla_x\cdot(pf)+ \epsilon \nabla_n\cdot(qf)+\epsilon
\nabla_p \cdot (\zeta u(x)f)+\epsilon \nabla_q\cdot[(\zeta n\cdot
\nabla_x u -2F)f]=Q(f),
\end{align}
where
$$
Q(f):=\zeta \nabla_p \cdot(pf +2k_BT \nabla_p f)+\zeta \nabla_q
\cdot(qf +2k_BT \nabla_q f).
$$
The remaining task in this section is to justify the following
\begin{Theorem}
The limit $\epsilon \downarrow 0$ of the $f^\epsilon$ is given by $f^0=\rho(t, x, n)M(p, q)$, where
\begin{equation}\label{m}
M=\exp\left(-\frac{p^2+q^2}{4k_BT}\right)
\end{equation}
satisfying $Q(M)=0$ and $\rho(t, x, n)$ solves the kinetic equation (\ref{fp1}).
\end{Theorem}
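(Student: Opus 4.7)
The plan is a formal Hilbert (Chapman--Enskog) expansion in powers of $\epsilon$. I write $f^\epsilon = f^0 + \epsilon f^1 + \epsilon^2 f^2 + O(\epsilon^3)$ and introduce the transport operator
$$ T_1 g := \nabla_x\cdot(pg) + \nabla_n\cdot(qg) + \nabla_p\cdot(\zeta u g) + \nabla_q\cdot((\zeta n\cdot\nabla_x u - 2F)g). $$
Substituting into (\ref{fe}) and matching powers of $\epsilon$ yields the hierarchy $Q(f^0)=0$, $Q(f^1)=T_1 f^0$, and $Q(f^2) = \partial_t f^0 + T_1 f^1$. The operator $Q$ decouples as the sum of two independent Ornstein--Uhlenbeck operators acting on $p$ and on $q$, so its nullspace in $(p,q)$ is one-dimensional, spanned by the Gaussian $M(p,q) = \exp(-(p^2+q^2)/(4k_BT))$; hence the order-$\epsilon^0$ equation forces $f^0 = \rho(t,x,n)M(p,q)$ for some nonnegative density $\rho$.

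For the order-$\epsilon^1$ equation the solvability condition $\int T_1 f^0\,dp\,dq = 0$ holds, because the $\nabla_p$ and $\nabla_q$ pieces integrate to zero as divergences while the $\nabla_x,\nabla_n$ pieces carry first moments of $M$ that vanish by parity. To invert $Q$ I use the key identities $Q(p_i M) = -\zeta p_i M$ and $Q(q_j M) = -\zeta q_j M$, which follow from the Gaussian identity $pM + 2k_BT\nabla_p M = 0$. Expanding $T_1(\rho M)$ and reading off the coefficients of $pM$ and $qM$ then produces an explicit formula for $f^1$, determined up to a harmless multiple of $M$ that I normalize to zero.

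The evolution of $\rho$ emerges from the order-$\epsilon^2$ solvability condition $\int (\partial_t f^0 + T_1 f^1)\,dp\,dq = 0$. Only the $\nabla_x$ and $\nabla_n$ fluxes of $T_1 f^1$ survive the $(p,q)$-integration, and the Gaussian second-moment identity $\int p_i p_j M\,dp\,dq = 2k_BT\,\delta_{ij}\int M\,dp\,dq$ (together with its $q$-analogue) converts $\int p\, f^1\,dp\,dq$ and $\int q\, f^1\,dp\,dq$ into the fluxes $(2k_BT/\zeta)\nabla_x\rho - u\rho$ and $(2k_BT/\zeta)\nabla_n\rho - (n\cdot\nabla_x u - 2F/\zeta)\rho$ respectively (after cancelling the common factor $\int M\,dp\,dq$). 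Substituting reproduces exactly (\ref{fp1}). The main obstacle is the Gaussian bookkeeping in constructing and inverting $f^1$: the sign conventions in the drift and diffusion pieces must be tracked simultaneously so that the diffusion coefficient $2k_BT/\zeta$ appears on the correct side of the resulting Fokker--Planck equation and the advective terms survive with the right sign.
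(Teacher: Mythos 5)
Your proposal is correct and follows essentially the same route as the paper: both identify the null space of $Q$ as ${\rm span}\{M\}$, invert $Q$ on the first-order correction via the explicit relations $Q(pM)=-\zeta pM$, $Q(qM)=-\zeta qM$ (the paper's $a=-\frac{1}{\zeta}pM$, $b=-\frac{1}{\zeta}qM$, $c=-a/(2k_BT)$, $d=-b/(2k_BT)$), and close with the Gaussian second-moment identities. The only cosmetic difference is that the paper extracts the $\rho$-equation from the exact, $\epsilon$-independent continuity equation for the rescaled fluxes $J_i^\epsilon=\epsilon^{-1}\int \{p,q\} f\,dp\,dq$ and then passes to the limit, rather than from your order-$\epsilon^{2}$ solvability condition; these amount to the same computation.
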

As usual the limit equation that is associated with (\ref{fe}) does
not depend on details of the operator $Q$. Rather, it depends on $Q$
possessing certain properties related to conservation, dissipation,
and equilibria that are stated below.  Note the operator $Q$ acts on
the augmented variables $(p, q)$ only and leaves other variables
$(t, x, n)$ as parameters. Thus we list some properties of $Q$ as an
operator acting on functions of $(p, q)$ only, which are essential
in the derivation of the limit equation.
\subsection{Properties of $Q$}
Note that the operator $Q(f)$ has the following properties
\begin{itemize}
\item The operator can be written as a conservative form
$$
Q(f)=2 \zeta k_BT \nabla_{p, q}\cdot \left(M \nabla_{p, q}
\left(\frac{f}{M}\right)\right).
$$

\item The conservation form leads to
 $$\int Q(f)dpdq=0$$
 for every $f\in L^1(dpdq)$.  This relation expresses the physical laws of mass conservation
 during the action of $Q$.

Moreover, this is the only such conservation law. This means that
$$
\int Q(f)\psi dpdq=0, \quad \forall f \Leftrightarrow \psi \in {\rm
span} \{1\}.
$$
\item Dissipation. There is a nonnegative function $\eta(f)$ that is
an entropy for the operator $Q$. This means that for $\eta(f) \in
L^1(dpdq)$ we have
$$
\int \eta'(f) Q(f) \leq 0,
$$
whose vanishing characterizes the local equilibrium of $Q$. For
flexible polymers, $\eta'(f)=f/M$ since
$$
D^Q(f)=\int Q(f)\frac{f}{M}dpdq=-\int M\left( \nabla_{p, q}
\left(\frac{f}{M}\right)\right)^2dpdq \leq 0.
$$
\item Existence of equilibria.  From the above dissipation property we see that
the function $f(p, q)$ such that $Q(f)=0$ forms a space identified through
$$
 \frac{f}{M}\in {\rm span}\{1\}.
$$
This equilibrium is unique up-to a constant multiplier.  We
normalize the density function then
$$
f_{eq}=\left[\int Mdpdq \right]^{-1}M.
$$


\end{itemize}
\subsection{Asymptotic limit as $\epsilon \downarrow 0$}
Equipped with these properties we now investigate the asymptotic limit.
Define
\begin{align*}
\rho^\epsilon(t, x)& =\int f dpdq,\\
J_1^\epsilon(t, x)& =\frac{1}{\epsilon}\int p f dpdq,\\
J_2^\epsilon(t, x)& =\frac{1}{\epsilon}\int q f dpdq.
\end{align*}
Then integration of (\ref{fe}) together with conservation property
of $Q$ yields
\begin{equation}\label{con}
    \partial_t \rho^\epsilon +\nabla_x\cdot J_1+\nabla_n \cdot J_2=0,
    \quad \forall \epsilon >0.
\end{equation}
We suppose that as $\epsilon \downarrow 0$ the limit of $f$ is identified  by
$f^0$. It follows from the equation (\ref{fe}) that
$$
f^0\in Q^{-1}(0).
$$
This implies that $f^0=M(p, q)\rho^0(t, x, n)$,  for the operator
$Q$ does not act on $(t, x, n)$.  Hence the probability density
function $f$ takes the following form
$$
f^\epsilon(t, x, n; p, q)=\rho^\epsilon(t, x, n)M(p, q)+\epsilon
g^\epsilon,
$$
where
$$
\lim_{\epsilon \downarrow 0} \rho^\epsilon(t, x, n)=\rho^0(t, x, n).
$$
Note that $\int pMdpdq=\int qMdpdq=0$, we thus have
\begin{align}
J_1^\epsilon &=\frac{1}{\epsilon}\int pf^\epsilon dpdq=\int
pg^\epsilon dpdq,\\
J_2^\epsilon &=\frac{1}{\epsilon}\int q f^\epsilon dpdq=\int q
g^\epsilon dpdq.
\end{align}
The limit of these flux functions depends only on the limit of $g^\epsilon$.

Linearity of the operator $Q$ and $Q(M)=0$ leads to
$$
Q(f^\epsilon)=Q(\rho^\epsilon(t, x)M(p, q)+\epsilon
g^\epsilon)=\epsilon Q(g^\epsilon).
$$
Using the equation (\ref{fe}) we obtain
\begin{align*}
Q(g^\epsilon) &=\frac{1}{\epsilon} Q(f^\epsilon)\\
& =\epsilon \partial_t f^\epsilon +
\nabla_x\cdot(pf^\epsilon)+\nabla_n\cdot(qf^\epsilon)+
\nabla_p\cdot(\zeta u(x)f^\epsilon)+ \nabla_q\cdot((\zeta n\cdot
\nabla_x u-2F)f^\epsilon)\\
& =\nabla_x\cdot(pM\rho^\epsilon )+\nabla_n\cdot(qM\rho^\epsilon)+
\nabla_p\cdot(\zeta u(x) \rho^\epsilon M)\\
& \qquad + \nabla_q\cdot((\zeta n\cdot \nabla_x u-2F)M\rho^\epsilon)
+O(\epsilon).
\end{align*}
Let $g^\epsilon \to g^0$ as $\epsilon \to 0$, then it is clear that
$$
Q(g^0)=p\cdot \nabla_x \rho^0 M+ q\cdot \nabla_n \rho^0 M +\zeta
\rho^0 u(x) \cdot \nabla_p M +\rho^0 (\zeta n\cdot \nabla_xu -2F)\cdot
\nabla_q M.
$$
Again note that  $Q$ is a linear operator, we may assume the
ansatz for $g^0$ as follows.
$$
g^0=a\cdot \nabla_x \rho^0 +b\cdot \nabla_n \rho^0+\zeta \rho^0
c\cdot u(x) +\rho^0 (\zeta n\cdot \nabla_xu -2F)\cdot d.
$$
In comparison with the expression of $Q(g^0)$ above, we see that it
is sufficient to find $a, b, c, d$ such as
\begin{align*}
Q(a)& =pM, \\
Q(b)&=qM,\\
Q(c)&=\nabla_p M,\\
Q(d) &=\nabla_q M.
\end{align*}
These relations are understood in the sense that the operator acts
on each component of the underlying vector. In order to identity
$(a, b, c, d)$ we state the following lemma:
\begin{Lemma} Let $Q$ be the collision operator and $g \in L^2(R^p\times R^q)$ such that
$\int g dpdq=0$. The problem

\begin{equation}\label{fg}
    Q(\psi)=g,
\end{equation}
has a unique weak solution in $\Lambda=\{\psi, \quad \int \psi
dpdq=0, \quad \psi \in  H^1 \}. $
\end{Lemma}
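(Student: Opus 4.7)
The plan is to recast $Q(\psi)=g$ as a variational problem on a Hilbert space and apply the Lax--Milgram theorem. The algebraic key, already noted in the properties of $Q$, is the factorized form
$$Q(\psi) = 2\zeta k_BT\, \nabla_{p,q}\cdot\bigl( M\, \nabla_{p,q}(\psi/M) \bigr),$$
which shows that $Q$ is symmetric and nonpositive with respect to the weighted inner product $\int \psi_1\psi_2/M\,dpdq$. The natural change of unknown is $u=\psi/M$.

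First I would multiply $Q(\psi)=g$ by $\phi/M$ for a test function $\phi$ and integrate by parts in $(p,q)$ to obtain the bilinear form
$$a(\psi,\phi) \;=\; 2\zeta k_BT \int M\, \nabla_{p,q}(\psi/M)\cdot \nabla_{p,q}(\phi/M)\,dpdq,$$
so the weak problem reads: find $\psi\in\Lambda$ with $a(\psi,\phi)=-\int g\,\phi/M\,dpdq$ for all admissible $\phi$. The constraint $\int\psi\,dpdq=0$ removes the one-dimensional kernel ${\rm span}\{M\}$ of $Q$ noted earlier, and the compatibility assumption $\int g\,dpdq=0$ ensures that the right-hand-side functional is well defined on the quotient by this kernel (so Fredholm compatibility is automatic).

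The heart of the argument, and the main obstacle, is coercivity of $a$ on $\Lambda$. Here I would invoke the Gaussian Poincar\'e (spectral gap) inequality for the Ornstein--Uhlenbeck measure $M\,dpdq$: for any $u\in H^1(M\,dpdq)$ with $\int uM\,dpdq=0$,
$$\int u^2 M\,dpdq \;\leq\; 2k_BT \int |\nabla u|^2 M\,dpdq.$$
Applied to $u=\psi/M$, which satisfies $\int uM\,dpdq=\int \psi\,dpdq=0$ by the definition of $\Lambda$, this estimate controls the full weighted $H^1(M\,dpdq)$-norm of $\psi/M$ by the Dirichlet seminorm in $a$, giving the required coercivity. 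Continuity of $a$ is immediate from Cauchy--Schwarz, and continuity of $\phi\mapsto -\int g\,\phi/M\,dpdq$ follows from Cauchy--Schwarz combined with the same Poincar\'e bound.

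An application of the Lax--Milgram theorem then delivers a unique $\psi\in\Lambda$ satisfying the weak formulation; reverting the substitution yields the unique weak solution of $Q(\psi)=g$. The one delicate point is to fix the functional setting consistently: although the statement refers to the plain $H^1$, the variational structure lives most naturally in the weighted space $H^1(M\,dpdq)$, and the hypothesis on $g$ is to be read so that $g/\sqrt{M}$ defines a continuous functional there. Once this is clarified, the Gaussian Poincar\'e inequality is the essential analytic input; all the other steps are routine.
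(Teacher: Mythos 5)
Your proposal is correct and takes essentially the same route as the paper, which disposes of this lemma in one line by ``applying the Lax--Milgram theorem to the corresponding variational formulation'' and only writes out the details for the rod-like analogue later in the text. In fact your treatment of the one nontrivial point is the more careful one: coercivity on the mean-zero subspace cannot be obtained from $M$ being bounded above and below (it is not, on the unbounded $(p,q)$-space), and the Gaussian Poincar\'e/spectral-gap inequality for the measure $M\,dp\,dq$ that you invoke is the correct analytic input, as is your observation that the natural functional setting is the weighted space $H^1(M\,dp\,dq)$ applied to $\psi/M$ rather than the plain $H^1$ appearing in the statement.
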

This lemma,  following from applying the Lax-Milgram theorem to the
corresponding variational formulation, ensures that there exists
unique $a, b, c, d$ in the space $\{\phi|\int \phi dpdq=0\}$ since
$$
\int \{pM, qM, \nabla_p M, \nabla_q M\}dpdq=0.
$$
We also note that from
$$
\nabla_p M= -\frac{1}{2k_BT}pM, \quad \nabla_q M= -\frac{1}{2k_BT}qM,
$$
it follows
$$
c=-\frac{a}{2k_BT}, \quad d=-\frac{b}{2k_BT}.
$$
It remains to determine only $a$ and $b$. Let $a=\alpha Mp$ we have
$$
Q(a)=-\alpha \zeta pM=pM,
$$
which gives $\alpha =-\frac{1}{\zeta}$, leading to  $a=-\frac{1}{\zeta}pM$. Similarly we obtain $b=-\frac{1}{\zeta} qM$.

Clearly we see that
$$
\int \{b, d\}\cdot pdpdq=0, \quad \int \{a, c\}\cdot qdpdq=0.
$$
These enable us to determine the asymptotic limits of the fluxes.
\begin{align*}
J_1^0 &= \int pg^0 dpdq \\
& = \int (a\cdot\nabla_x \rho^0) p  +\int (b\cdot \nabla_n \rho^0 )p +
\zeta  \rho^0 \int (c \cdot u(x)) p
+\rho^0 \int d\cdot (\zeta n \cdot  \nabla_x u-2F)p\\
& =\int a\cdot \left(\nabla_x \rho^0- \frac{\zeta u(x)\rho^0}{2k_B T} \right) p dpdq \\
& =-\left(\nabla_x \rho^0- \frac{\zeta u(x)\rho^0}{2k_B T} \right)\frac{2k_B T}{\zeta}\int Mdpdq.
\end{align*}
\begin{align*}
J_2^0 &= \int qg^0 dpdq \\
& = \int (a\cdot \nabla_x \rho^0)q  +\int b\cdot \nabla_n \rho^0  q + \zeta \rho^0 \int (c \cdot u(x)) q
+\rho^0 \int d\cdot (\zeta n\cdot \nabla_x u-2F) q\\
& =\int b \cdot\left(\nabla_n \rho^0- \frac{\rho^0}{k_B T}(\zeta n\cdot \nabla_x u-2F) \right) q dpdq \\
& = -\left(\nabla_n \rho^0- \frac{\rho^0}{2k_B T}(\zeta n\cdot \nabla_x u-2F) \right) \frac{2k_B T }{\zeta}\int Mdpdq.
\end{align*}
Now the equation (\ref{con}) divided  by $\int Mdpdq$  asymptotically converges to
$$
\partial_t \rho^0 + \nabla_x\cdot(u(x)\rho^0) +\nabla_n \cdot ((n\cdot \nabla_x u-\frac{2F}{\zeta})\rho^0)
= \frac{2k_BT}{\zeta}\Delta_x \rho^0 +\frac{2k_BT}{\zeta}\Delta_n \rho^0.
$$
This is exactly the equation (\ref{fp1}) derived from ignoring inertial forces.

\begin{Remark} With these kinetic equations for Dumbell like polymers, it is
natural to understand how these polymers contribute to the
macroscopic flow governed by a coupled Navier-Stokes system
$$
\partial_t u +(u\cdot \nabla_x u)u+\nabla_x p= \frac{\gamma}{Re}\Delta_x u
+\frac{1-\gamma}{ReDe}\nabla_x \cdot \tau, \quad \nabla_x \cdot u=0,
$$
with the stress determined by
$$
\tau=\int (n\otimes F)fdndpdq.
$$
Here $De$ is called Deborah number, which is the most important
parameter in non-Newtonian fluids. $Re$ and $\gamma$ are the
Reynolds number and viscosity ratio, respectively.  The tensor force
follows the case when inertial force is ignored,  the derivation of
the tensor force with inclusion of inertial effects is beyond the
scope of this work.
\end{Remark}
\begin{Remark}
The spring force is determined by a potential function $U$ through $F=-\nabla_n U$. Different potential leads to different models. Two choices are commonly used: \\

\begin{tabular}{|l|c|c|}
  \hline
   & Force  & Potential \\
  Hookean spring & $Hn$& $\frac{1}{2}Hn^2$ \\
  FENE spring & $\frac{Hn}{1-(n/n_0)^2}$ & $-\frac{1}{2}Hn_0^2ln\left(1-(n/n_0)^2 \right)$ \\
  \hline
\end{tabular}\\

where $n_0$ is the maximum extension of the beads connector.
\end{Remark}

\section{Rigid rod-like polymers}
Though many polymers are flexible, there is still a large class of
polymers which are not flexible and assume a rod-like structure.
Rod-like polymers have some peculiar properties and have attracted
a great deal of attention.

We consider rod-like molecules in concentrated regime. Rod-like polymers can have only two kinds of motion, i.e., translation and rotation. The translational Brownian motion is the random motion of the position vector $x$ of the center of mass, and the rotational Brownian motion is the random motion of the unit vector $n$ ($|n|=1$) which is parallel to the polymer.  We shall build a kinetic model for the probability distribution of orientational motions of rod in every point of phase space $(x, p)$. This serves as a microscopic equation, which is expected to be coupled with the macroscopic equation (the Navier-Stokes equation) for the fluid velocity.

For the convenience of calculations in what follows, we introduce a
local coordinate on the sphere $\mathbb{S}^2$ as $q=(\theta, \phi)$,
and set $n=(sin \theta cos \phi, sin \theta sin \phi, cos \theta)$
with $\theta \in [0, \pi)$ and $\phi\in [0, 2\pi)$. The unit vector
$e_\theta=(cos \theta cos \phi, cos \theta sin \phi, -sin \theta)$
and $e_\phi=(-sin \phi, cos \phi, 0)$. We note that $\frac{\partial
n}{\partial \theta}=e_\theta$, $\frac{\partial n}{\partial \phi}=sin
\theta e_\phi$. Thus, any tangent vector on the sphere is written as
$$
\dot n =\dot \theta e_\theta +\dot \phi sin \theta e_\phi,
$$
which gives
$$
\omega=n\times \dot n= \dot \theta e_\phi - sin \theta \dot \phi
e_\theta.
$$
 The gradient is
$$
\nabla_n=e_\theta \partial_\theta +\frac{e_\phi}{sin
\theta}\partial_\phi.
$$
The rotational gradient is
\begin{equation}\label{go}
\mathcal{R}= n\times \nabla_n=\left(-\frac{cos \phi cos \theta}{sin
\theta} \partial_\phi -sin \phi \partial_\theta, -\frac{cos \theta
sin \phi}{sin \theta} \partial_\phi +cos\phi \partial_\theta,
\partial_\phi \right)^T.
\end{equation}
Divergence of a vector $A= A_\theta e_\theta +  A_\phi  e_\phi$ is
$$
\nabla_n\cdot A=\frac{1}{sin \theta} \partial_\theta(sin \theta
A_\theta) +\frac{1}{sin \theta}\partial_\theta A_\phi.
$$
The gradient in terms of $\omega$ is defined only for a given $n$,
and will be understood from now on as
$$
 n\cdot \nabla_\omega f=0, \quad \forall f.
$$

To be more specific, we regard the identical liquid crystal
molecules as inflexible rods of a thickness $b$ which is much
smaller than their length $L$, as illustrated in Figure \ref{fig2}:
\begin{figure}[pth]
\begin{center}
\includegraphics[scale=0.4]{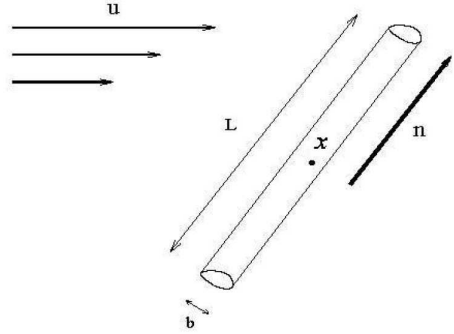}
\caption{A rod like polymer}\label{fig2}
\end{center}
\end{figure}

\subsection{Translational Brownian motion}
Let $U$ be an interaction potential, the force thus  induced is $-\nabla_xU$. The motion law is
$$
m\ddot{x}=-\zeta_t(\dot x -u(x))-\nabla_x U,
$$
where $\zeta_t$ is the friction coefficient and $u(x)$ is the fluid
velocity field. If $f$ is the probability distribution in $x$, the
Brownian force is expressed as $F_B=-\nabla_x (k_BT ln f)$, where
$-k_BTln f$ is the chemical potential. For nontrivial inertial
force, the distribution needs to be accounted in an extended
environment with inclusion of $p=\dot x$. The corresponding
translational Brownian force thus reduces to
\begin{equation}\label{cp}
F_B=\nabla_p W, \quad W=-\frac{\zeta_t k_B T}{m}ln f .
\end{equation}
This can be justified by a similar derivation based on Brownian
motions as that in Section 2. The scaled coefficient reflects
balances between the friction force and the inertial force. Putting
together we have the following translational motion law
\begin{align}
\dot x &=p, \\
\dot p &= -\frac{\zeta_t}{m}(p-u(x)) -\frac{1}{m}\nabla_x U-\frac{\zeta_t }{m^2} \nabla_p (k_B Tln f).
\end{align}
In what follows we shall conveniently use the chemical potential to describe the Brownian force.

\subsection{Rotational Brownian motion}
We consider a rod rotating with angular velocity $\omega$, then the rotational motion can be described as
$$
[J]\dot \omega =T,
$$
where $J$ is the moment of inertia and $T$ is the total torque.

1. Rotational frictional force.

Consider a rod of length $L$  placed in a viscous fluid with fluid
velocity $u(x)$, where $x$ is the center of mass of the rod.

The rod is parameterized by $s$, ranging from
$-L/2$ to $L/2$, then the position vector of the $s$-point on this rod is written as
$$
x(s)=x+ ns, \quad x(0)=x, \quad -L/2\leq s\leq L/2.
$$
Let $v(s)$ and $F(s)$ be the velocity of this point, and forces acting on it. The velocity
$v(s)$ is expressed by the angular velocity $\omega$
$$
 v(s)=\omega \times (x(s)-x)=\omega \times (ns).
$$
The frictional force at $x(s)$ is
$$
F(s)=-\xi(s)(v(s)-u(x(s))),
$$
where $\xi(s)$ is the frictional coefficient, being symmetric $\xi(s)=\xi(-s)$.
Note that
$$
u(x(s))\simeq u(x)+\nabla_x u(x(s)-x(0))=u(x)+s \nabla_x u\cdot n.
$$
The frictional force thus reduces to
$$
F(s)=-s\xi(s)(\omega \times n -\nabla_x u \cdot n)+\xi(s)u(x).
$$
Thus the total torque induced by the frictional force acting on the rod  is
\begin{align*}
T& =\frac{1}{L}\int ^{L/2}_{-L/2}(x(s)-x)\times F(s)ds\\
 &=- \xi_2 n \times (\omega \times n -\nabla_x u \cdot
 n)+ \xi_1 n \times u(x),
\end{align*}
where
$$
\xi_k=\frac{1}{L}\int_{-L/2}^{L/2}s^k\xi(s) ds, \quad k=1, 2.
$$
Symmetry of $\xi(s)=\xi(-s)$ leads to $\xi_1=0$. Then using $n\cdot
\omega=0$ we have
$$
T=-\zeta_r\left[(Id-n\otimes n) \omega -n \times (\nabla_x u \cdot
n)\right]=-\zeta_r\left[\omega -n \times (\nabla_x u \cdot
n)\right],
$$
where $\zeta_r=\xi_2$ denotes the rotational friction coefficient.

2. Thermodynamic potential force. \\

From $[J]\dot \omega=T$ it follows
$$
\frac{d}{dt}E = T\cdot \omega, \quad E=\frac{1}{2} \omega^\top  [J]\omega.
$$
Let the potential be denoted by $U$. Then $d(E+U)=0$ gives
$dE=-dU=-\nabla_n U\cdot dn.$

Note that from $|n|=1$ we have
$$
\omega =n \times \dot n,
$$
which leads to
$$
\omega dt=n\times dn.
$$
We thus have
$$
T\cdot(n\times dn)=T\cdot \omega dt=dE=-\nabla_n U \cdot dn.
$$
Therefore $T\times n=-\nabla_n U$, i.e.,
\begin{equation}\label{ru}
    T=-\mathcal{R}U,
\end{equation}
where $ \mathcal{R}$ is the rotational gradient operator given by
(\ref{go}).

3. Rotational Brownian force. \\
The following property will be used on our derivation of forces
below.
\begin{Lemma} \label{lem3.1} For a fixed vector $a$, let $x=a\times y$. Then for
any function smooth $g$,
$$
\nabla_y g=-a\times \nabla_x g
$$
and
$$
a\cdot \nabla_y g=0, \quad \forall g.
$$
\end{Lemma}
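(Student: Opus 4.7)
The plan is a direct chain-rule computation. Interpret $g$ as a smooth function of a single vector argument, so that the statement compares the gradient of $g$ viewed as a function of $x$ with the gradient of the composite $y \mapsto g(a\times y)$ viewed as a function of $y$. Writing $x_i = \epsilon_{ijk} a_j y_k$ (Einstein summation, with $\epsilon$ the Levi-Civita symbol), one reads off $\partial x_i/\partial y_l = \epsilon_{ijl} a_j$. The chain rule then gives
$$
\frac{\partial g}{\partial y_l} \;=\; \epsilon_{ijl}\, a_j\, \frac{\partial g}{\partial x_i}.
$$
Using the cyclic identity $\epsilon_{ijl} = \epsilon_{lij}$ together with the antisymmetry $\epsilon_{lij} = -\epsilon_{lji}$, the right-hand side equals $-\epsilon_{lji}\, a_j\,(\partial g/\partial x_i) = -(a\times \nabla_x g)_l$, which is the first claim.

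The second identity follows at once: dotting both sides of the first with $a$,
$$
a\cdot \nabla_y g \;=\; -\,a\cdot (a\times \nabla_x g) \;=\; 0,
$$
by vanishing of the scalar triple product on a repeated vector. Geometrically this merely records that the map $y\mapsto a\times y$ has kernel $\mathrm{span}(a)$, so infinitesimal translations of $y$ along $a$ leave $x$, and hence $g$, unchanged. No genuine obstacle arises; the only subtlety worth flagging is the interpretation of $\nabla_y$ as differentiation of the composite $g(a\times y)$ with the relation $x = a\times y$ held fixed. A coordinate-free alternative would bypass indices entirely by writing the differential $dg = \nabla_x g \cdot dx$ and substituting $dx = a\times dy$, then invoking the cyclic identity $(a\times dy)\cdot v = dy\cdot (v\times a)$ to identify $\nabla_y g$; I mention this only as a sanity check, since the index calculation above is already decisive.
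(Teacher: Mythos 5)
Your proof is correct and follows essentially the same route as the paper: the chain rule applied to $x_i=\epsilon_{ijk}a_jy_k$ with the Levi-Civita symbol, followed by cyclic permutation and antisymmetry to identify $-(a\times\nabla_x g)_l$, and the second identity obtained from the vanishing of the contraction of $\epsilon$ against the symmetric product $a_ja_m$ (your triple-product observation is the same fact). No differences worth noting.
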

\begin{proof}Let $\epsilon_{ijk}$  as the usual permutation symbol,
then $x_i=\sum_{j,k}\epsilon_{ijk} a_jy_k$.  We thus have
\begin{align*}
(\nabla_y g)_m &=\sum_i \partial_{x_i}g \frac{\partial x_i
}{\partial y_m}=\sum_i \partial_{x_i}g
\sum_{j,k}\epsilon_{ijk}a_j\delta_{mk}\\
&=\sum_{i,j}\epsilon_{mij}\partial_{x_i}g
a_j=-\sum_{i,j}\epsilon_{mji}a_j \partial_{x_i}g.
\end{align*}
This gives the desired relation. We now show the second claim.
$$
\sum_{m}a_m\partial_{y_m}g= \sum_{mij}\epsilon_{mij}\partial_{x_i}g
a_m a_j=-\sum_{mij}\epsilon_{mij}\partial_{x_i}g a_j a_m,
$$
which ensures that $a\cdot \partial_y f=0$ holds for any smooth
function $g$

\end{proof}

Consider a rod again with center of mass at $x$, and momentum $p=0$
at rest, the point vector of rod is $x(s)=x+sn$ with $s$ ranging in
$[-L/2, L/2]$. Correspondingly
$$
p(s)= s \dot n=s \omega \times n.
$$
From this we obtain that
\begin{equation}\label{os}
\omega=\frac{1}{s}n \times p(s).
\end{equation}
Also rod symmetry implies that
\begin{equation}\label{ps}
p(-s)=-p(s).
\end{equation}
For any fixed $s$ the associated Brownian force is calculated by
\begin{equation}\label{fb}
F_B(s) = \nabla_{p(s)}W, \quad   W=-\frac{\zeta_r k_B T}{m}ln f .
\end{equation}
Note that for each fixed $s$ and vector $n$, the mapping from $p(s)$
to $\omega$ is well-defined. The result in Lemma \ref{lem3.1} gives
\begin{align*}
F_B(s) = \nabla_{p(s)}W =  -\frac{1}{s} n\times \nabla_\omega W.
\end{align*}
Thus the corresponding Brownian torque is determined by
$$
T_B=\frac{1}{L}\int^{L/2}_{-L/2}s n \times F_B(s) ds,
$$
which when combined with the above calculations leads to
\begin{align*}
T_B&=\frac{1}{L}\int_{-L/2}^{L/2} s n \times \left(
-\frac{1}{s}n\times \nabla_\omega W \right)ds \\
& =-n\times (n\times \nabla_\omega W)\\
&=(Id-n\otimes n)\nabla_\omega W\\
&=-\frac{\zeta_r k_B T}{m}(Id-n\otimes n)\nabla_\omega (ln f)\\
&=-\frac{\zeta_r k_B T}{m}\nabla_\omega (ln f).
\end{align*}
Note that the moment of inertia for the rod is
$$
[J]=j(Id-n\otimes n), \quad j=\frac{mL^2}{12}.
$$
Using $\dot \omega \cdot n=0$, we thus have
$$
[J]\cdot \dot \omega=j \dot \omega.
$$
This together with all forces involved leads to the following motion
law
\begin{align}
\dot n &= \omega \times n, \\
\frac{mL^2}{12}\dot \omega &=-\zeta_r (\omega -n\times (\nabla_x
u)\cdot n)  -\frac{\zeta_r k_B T}{m}\nabla_\omega (ln f)
-\mathcal{R} U,
\end{align}
where $U$ is the interaction potential.
\begin{Remark} The potential can be defined as a generalized Onsager's
potential:
$$
U=\nu^2 k_B TbL^2 \int_{R_x}\int_{R_p}\int_{|n'|=1}\int_{n\cdot
\omega=0}B(x,p;x',p')|n \times n'|f(t, x',p', n',
\omega)dn'dx'dp'd_n\omega.
$$
Here $R_x$ and $R_p$ is the configuration space for variables $x$
and $p$. $B$ is some localized kernel.
\end{Remark}

\subsection{Kinetic equations}
We shall derive a kinetic equation in the phase space $(x, p, n,
\omega)$ with $(x, p)\in \mathbb{R}^3 \times \mathbb{R}^3$ and $(n,
\omega)\in  T\mathbb{S}^2$, where
\begin{equation}\label{mm}
    T\mathbb{S}^2:=\{(n, \omega)| \quad n\in \mathbb{S}^2, \quad \omega \in
    T_n\mathbb{S}^2\},
\end{equation}
which is usually called the tangent bundle of the manifold
$\mathbb{S}^2$.  We start from the continuity equation of a formal
form
\begin{equation}\label{fp3}
\partial_t f+\nabla_x \cdot (\dot x f) +\nabla_p \cdot (\dot p f)+\nabla_{n} \cdot (\dot
n f)+\nabla_{\dot n} \cdot (\ddot n f)=0.
\end{equation}
This equation can be simplified when restricted on the tangent
bundle: $(n, \omega)\in T\mathbb{S}^2 .$

First we state the following
\begin{Lemma}\label{lem3.2} Let $(n, \omega)$ be any element of the tangent bundle $T\mathbb{S}^2$, mapped to another element
$(m, \dot n)\in T\mathbb{S}^2$ such that
$$
m=n, \quad \dot n= \omega \times n,
$$
then for any smooth function $f$  we have
\begin{equation}\label{re}
\nabla_n f= \nabla_m f -\omega \times \nabla_{\dot n} f, \quad
\nabla_\omega f= n\times \nabla_{\dot n} f.
\end{equation}

\end{Lemma}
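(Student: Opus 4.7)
The plan is to view the lemma as a change-of-variables computation on the tangent bundle. The map $(n,\omega)\mapsto(m,\dot n)=(n,\omega\times n)$ is smooth, with $(n,\omega)$ regarded as independent coordinates on $T\mathbb{S}^2$, so any smooth function of $(m,\dot n)$ may be pulled back to a function of $(n,\omega)$, and the identities in \eqref{re} are precisely the two blocks of the chain rule. I would therefore organize the proof by computing $\nabla_\omega f$ first (the easier block, where $m$ is independent of $\omega$) and then $\nabla_n f$ (where both $m$ and $\dot n$ depend on $n$).

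For the $\omega$-derivative: since $m=n$ does not depend on $\omega$, the only contribution comes through $\dot n=\omega\times n=-n\times\omega$. Holding $n$ fixed, Lemma~\ref{lem3.1} applies with the fixed vector $a=-n$ and the roles $x=\dot n$, $y=\omega$, giving directly
$$
\nabla_\omega f = -(-n)\times \nabla_{\dot n} f = n\times\nabla_{\dot n} f,
$$
which is the second identity. (One could alternatively verify this component-wise via $(\dot n)_i=\epsilon_{ijk}\omega_j n_k$ and the cyclic symmetry of $\epsilon_{ijk}$.)

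For the $n$-derivative: holding $\omega$ fixed, the chain rule splits as
$$
\nabla_n f = \left.\frac{\partial m}{\partial n}\right.^{\!\top}\nabla_m f + \left.\frac{\partial \dot n}{\partial n}\right.^{\!\top}\nabla_{\dot n} f.
$$
The first term is simply $\nabla_m f$ because $m=n$. For the second, $\dot n=\omega\times n$ with $\omega$ fixed is exactly the setup of Lemma~\ref{lem3.1} (now with $a=\omega$, $x=\dot n$, $y=n$), which yields $\nabla_n(\text{pullback})=-\omega\times\nabla_{\dot n} f$. Adding the two contributions gives the first identity in \eqref{re}.

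The only real subtlety, and where I expect readers to pause, is the bookkeeping of which variable is held fixed in each partial derivative: $\nabla_n$ means fixed $\omega$ even though through $\dot n$ the two variables are intertwined, while $\nabla_m$ and $\nabla_{\dot n}$ on the right side refer to the independent coordinates $(m,\dot n)$ of the target tangent bundle. Once this is made explicit, the result is just two applications of Lemma~\ref{lem3.1} plus the trivial Jacobian coming from $m=n$, so no extra estimates or smoothness arguments beyond smoothness of $f$ are needed.
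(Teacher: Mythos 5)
Your proof is correct and takes essentially the same route as the paper's: both amount to the chain rule for the change of variables $(n,\omega)\mapsto(m,\dot n)=(n,\omega\times n)$ combined with the cross-product differentiation identity, and your sign bookkeeping (applying Lemma~\ref{lem3.1} with $a=-n$ for the $\omega$-block and $a=\omega$ for the $n$-block) checks out against the paper's formulas. The only difference is organizational: the paper re-derives the identity from scratch by a direct $\epsilon_{ijk}$ index computation, whereas you factor the same calculation through the already-proven Lemma~\ref{lem3.1}, which is a slightly cleaner packaging of identical content.
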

\begin{proof}Let $\epsilon_{ijk}$ be the usual permutation symbol,
then $\dot n_l= \sum_{jk}\epsilon_{ljk} \omega_jn_k$. Further we
have
\begin{align*}
\partial_{n_i}f &= \partial_{m_i}f +\sum_l \partial_{\dot n_l}f
\cdot \sum_{j, k}\epsilon_{ljk}\omega_j \delta_{ik}\\
&=\partial_{ m_i}f +\sum_{l,j}\epsilon_{ilj} \partial_{\dot n_l}f
\omega_j \\
&=\partial_{ m_i}f - \sum_{l, j}\epsilon_{ijl} \omega_j
\partial_{\dot n_l}f,
\end{align*}
where $\delta_{ij}$ is the Kronecker delta symbol. This gives the
first relation in (\ref{re}). The second relation follows from
\begin{align*}
\partial_{\omega_i}f &=\sum_l \partial_{\dot n_l}f
\cdot \sum_{j, k}\epsilon_{ljk}\delta_{ij} n_{k}\\
&=\sum_{l, k}\epsilon_{lik} \partial_{\dot n_l}f n_k.
\end{align*}
\end{proof}
Using similar arguments we have the following:
\begin{Lemma}\label{lem3.3} Let $g$ be any smooth vector function, then
\begin{equation}\label{re2}
    (n \times \nabla_n )\cdot g=-\nabla_n \cdot(n\times g), \quad
     (n \times \nabla_\omega )\cdot g=-\nabla_\omega \cdot(n\times
     g).
\end{equation}
\end{Lemma}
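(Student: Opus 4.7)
The plan is to verify both identities in components with the Levi-Civita symbol $\epsilon_{ijk}$, in the same style as the index computations done in Lemmas \ref{lem3.1} and \ref{lem3.2}. The two identities have the same structure, so a single calculation covers both after noting one small difference.

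First I would expand the left-hand side of the first identity as
$$(n\times \nabla_n)\cdot g=\sum_{i,j,k}\epsilon_{ijk}\,n_j\,\partial_{n_k}g_i.$$
For the right-hand side, applying the Leibniz rule gives
$$-\nabla_n\cdot(n\times g)=-\sum_{i,j,k}\epsilon_{ijk}\,\partial_{n_i}(n_j g_k)=-\sum_{i,j,k}\epsilon_{ijk}\delta_{ij}g_k-\sum_{i,j,k}\epsilon_{ijk}\,n_j\,\partial_{n_i}g_k.$$
The first sum vanishes because $\epsilon_{iik}=0$ by antisymmetry of $\epsilon$. In the remaining sum I would relabel the dummy indices $i\leftrightarrow k$ and use $\epsilon_{kji}=-\epsilon_{ijk}$, which turns $-\sum\epsilon_{ijk}n_j\partial_{n_i}g_k$ into $\sum\epsilon_{ijk}n_j\partial_{n_k}g_i$, matching the left-hand side.

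The second identity is handled by exactly the same computation with $\partial_{n_\cdot}$ replaced by $\partial_{\omega_\cdot}$. The only step that needs attention is the Leibniz term: since $n$ and $\omega$ are treated as independent coordinates on the tangent bundle $T\mathbb{S}^2$ (consistent with the convention $n\cdot\nabla_\omega f=0$ introduced earlier in Section 3), one has $\partial_{\omega_i}n_j=0$, so the $\delta_{ij}g_k$ term does not even appear and only the relabeling argument remains.

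There is no real obstacle here; the statement is essentially the divergence-form counterpart of the integration-by-parts identity $\int (n\times \nabla)f\cdot g=-\int f(n\times\nabla)\cdot g$ restricted to the relevant variables, and the proof is a one-line antisymmetry argument once the index expansion is written down. The only point of care is the independence of $n$ and $\omega$ as variables, which is precisely the convention under which the operator $\mathcal{R}$ and the rotational gradient $\nabla_\omega$ were defined in this section.
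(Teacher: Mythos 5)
Your proof is correct and is exactly the argument the paper intends: the paper omits the proof of Lemma \ref{lem3.3}, saying only that it follows ``using similar arguments'' to Lemma \ref{lem3.2}, i.e.\ the same Levi--Civita index manipulation you carry out, with the product term killed by $\epsilon_{iik}=0$ and the sign recovered by relabeling dummy indices. Your remark that the $\omega$-version has no Leibniz term because $n$ and $\omega$ are independent coordinates is the right (and only) point of care.
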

Note from $\omega=n\times \dot n$ we have $\dot \omega=n\times \ddot
n$ leading to $\dot \omega \cdot n=0$. Also
$$
\dot n=\omega \times n, \quad \ddot n=\dot \omega \times n
-|\omega|^2 n.
$$
Equipped with above lemmas and relations, we are able to reduce the
equation (\ref{fp3}). We only check the last two terms on the
left-hand of (\ref{fp3}). First,
$$
\nabla_n\cdot(\dot n f)=\nabla_n\cdot(\omega \times n f)=\mathcal{R}\cdot(\omega
f),
$$
where $\mathcal{R}:=n\times \nabla_n$ is the rotational gradient
operator. For any smooth function $g$, Lemma 3.2 and Lemma 3.3 imply
that
$$
\nabla_{\dot n}g=-(n\times \nabla_\omega) g.
$$
This enables us to simplify the last term in equation (\ref{fp3})
\begin{align*}
\nabla_{\dot n} \cdot (\ddot n f)& =-(n\times \nabla_\omega)\cdot ((\dot \omega
\times n -|\omega|^2 n)f)\\
&=\nabla_\omega \cdot (n\times (\dot \omega \times n
-|\omega|^2n)f)\\
&=\nabla_\omega\cdot((Id-n\otimes n)\dot \omega f)\\
&=\nabla_\omega\cdot(\dot \omega f).
\end{align*}
Therefore the effective kinetic equation becomes
\begin{equation}\label{ke}
\partial_t f+\nabla_x \cdot (pf) +\nabla_p \cdot (\dot p f)+\mathcal{R} \cdot (\omega f)
+\nabla_{\omega} \cdot (\dot \omega f)=0,
\end{equation}
where
\begin{align*}
\dot p & =-\frac{\zeta_t}{m}(p-u(x))-\frac{1}{m}\nabla_x
U-\frac{\zeta_t}{m^2}\nabla_p (k_BT ln f),\\
 \dot \omega &= -\frac{\zeta_r}{m} (\omega -n\times \nabla_x u(x) \cdot
n) -\frac{1}{m} \mathcal{R }U -\frac{\zeta_r}{m^2}\nabla_\omega(k_BT
ln f).
\end{align*}
Note here the coefficient $L^2/12$ has been absorbed in both the
$\zeta_r$ and the potential $U$, which is independent of both  $p$
and $\omega$.

\subsection{Scaling} We are interested in the solution behavior
when inertia vanishes.

We now make the following scaling
$$
m=\epsilon^2, \quad \epsilon p \to p, \quad \epsilon \omega \to
\omega,
$$
under this scaling the system (\ref{ke}) written into the new
variables becomes
\begin{align} \label{ske}
\epsilon \partial_t f &+\nabla_x \cdot(pf)+ \mathcal{R}\cdot(\omega f)+ \zeta_t
\nabla_p \cdot(u(x) f)\notag \\
\quad & + \nabla_\omega \cdot((\zeta_r n\times \nabla_xu \cdot
n-\mathcal{R}\cdot U) f)=\frac{1}{\epsilon}Q(f),
\end{align}
where
$$
Q(f)=\zeta_t \nabla_p\cdot(pf+k_BT \nabla_p f)+\zeta_r \nabla_\omega
\cdot(f\omega +k_BT \nabla_\omega f).
$$
Our next task is to investigate the formal limit $\epsilon \to 0$ of this problem.
\begin{Theorem}
The limit $\epsilon \downarrow 0$ of the $f^\epsilon$ is given by $f^0=\rho(t, x, n)M(p,\omega)$, where
\begin{equation}\label{m_rod}
M=\exp\left(-\frac{p^2+\omega^2}{2k_BT}\right)
\end{equation}
satisfying $Q(M)=0$ and $\rho(t, x, n)$ solves the following kinetic equation
\begin{equation}\label{fp4}
  \partial_t \rho +\nabla_x\cdot(u(x)\rho) +\mathcal{R}\cdot(n\times \nabla_xu \cdot \rho)=D_t\Delta_x \rho +D_r \mathcal{R}\cdot \left[\mathcal{R}\rho +\frac{\rho}{k_B T}\mathcal{R}U \right],
\end{equation}
where
$$
D_t=\frac{k_BT}{\zeta_t}, \quad D_r=\frac{k_BT}{\zeta_r}.
$$
\end{Theorem}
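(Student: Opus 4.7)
The plan is to transport the hydrodynamic-limit argument of Section 2 over to the rod setting, working on the enlarged fibre $(p,\omega)\in\mathbb{R}^3\times T_n\mathbb{S}^2$ and using the rotational calculus from Lemmas 3.1--3.3. First I would record the properties of $Q$ strictly parallel to \S 2.4: the conservative form
\begin{equation*}
Q(f)=\zeta_t k_BT\,\nabla_p\!\cdot\!\bigl(M\nabla_p(f/M)\bigr)+\zeta_r k_BT\,\nabla_\omega\!\cdot\!\bigl(M\nabla_\omega(f/M)\bigr),
\end{equation*}
mass conservation $\int Q(f)\,dp\,d_n\omega=0$, entropy dissipation $\int Q(f)(f/M)\,dp\,d_n\omega\le 0$, null space $\mathrm{span}\{M\}$, and a rod analogue of Lemma 2.1 obtained by Lax--Milgram on $L^2(M^{-1}dp\,d_n\omega)$. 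Integrating (\ref{ske}) over $(p,\omega)$ kills the $\nabla_p$- and $\nabla_\omega$-divergences and the collision term, and the identity $\mathcal{R}\cdot(\omega f)=\omega\cdot\mathcal{R}f$ (valid since $\omega$ is a fibre coordinate independent of $n$) yields
\begin{equation*}
\partial_t\rho^\epsilon+\nabla_x\!\cdot\! J_1^\epsilon+\mathcal{R}\!\cdot\! J_2^\epsilon=0,\quad J_1^\epsilon=\tfrac{1}{\epsilon}\!\!\int\!p f\,dp\,d_n\omega,\ J_2^\epsilon=\tfrac{1}{\epsilon}\!\!\int\!\omega f\,dp\,d_n\omega.
\end{equation*}

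Second, I would run the Hilbert expansion $f^\epsilon=\rho^\epsilon M+\epsilon g^\epsilon$; since $f^0\in\ker Q$, the limit is forced to be $f^0=\rho^0(t,x,n)M(p,\omega)$. Extracting the next order in (\ref{ske}) produces
\begin{equation*}
Q(g^0)=\bigl(p\cdot\nabla_x\rho^0+\omega\cdot\mathcal{R}\rho^0\bigr)M+\rho^0\zeta_t u\cdot\nabla_pM+\rho^0(\zeta_r n\times\nabla_xu\cdot n-\mathcal{R}U)\cdot\nabla_\omega M.
\end{equation*}
Because $\nabla_pM=-pM/(k_BT)$ and $\nabla_\omega M=-\omega M/(k_BT)$, the linear ansatz
\begin{equation*}
g^0=a\cdot\nabla_x\rho^0+b\cdot\mathcal{R}\rho^0+\zeta_t\rho^0 c\cdot u+\rho^0 d\cdot(\zeta_r n\times\nabla_xu\cdot n-\mathcal{R}U)
\end{equation*}
reduces the problem to $Q(a)=pM$, $Q(b)=\omega M$ together with $c=-a/(k_BT)$, $d=-b/(k_BT)$; trying $a=\alpha pM$, $b=\beta\omega M$ then immediately gives $\alpha=-1/\zeta_t$ and $\beta=-1/\zeta_r$, exactly as in \S 2.

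Finally I would compute $J_1^0$ and $J_2^0$. The cross moment $\int p_i\omega_jM\,dp\,d_n\omega=0$ removes the $b,d$-terms from $J_1^0$ and the $a,c$-terms from $J_2^0$. Using $\int p_ip_jM\,dp\,d_n\omega=\delta_{ij}k_BT\!\int\!M$ and its tangent-bundle analogue $\int\omega_i\omega_jM\,dp\,d_n\omega=k_BT(Id-n\otimes n)_{ij}\!\int\!M$, and noting that $\mathcal{R}\rho^0$, $\mathcal{R}U$, and $n\times\nabla_xu\cdot n$ are all orthogonal to $n$ so that the projector $Id-n\otimes n$ acts as the identity whenever it meets them, a short calculation gives
\begin{equation*}
J_1^0=\bigl(u\rho^0-D_t\nabla_x\rho^0\bigr)\!\!\int\!\!M,\quad J_2^0=\bigl(\rho^0 n\times\nabla_xu\cdot n-D_r\mathcal{R}\rho^0-\tfrac{D_r}{k_BT}\rho^0\mathcal{R}U\bigr)\!\!\int\!\!M.
\end{equation*}
Dividing the conservation law by $\int M\,dp\,d_n\omega$ then delivers exactly (\ref{fp4}). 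The main obstacle I anticipate is the tangent-bundle bookkeeping: verifying the Poincar\'e-type inequality on $\mathbb{R}^3\times T_n\mathbb{S}^2$ needed to close the Lax--Milgram step for the rod analogue of Lemma 2.1, and carefully tracking that the projector $Id-n\otimes n$ is indeed neutralized every time it contracts with an $\mathcal{R}$-gradient or with an $n\times(\cdot)$-object, since incautious manipulation of these rotational quantities is the easiest place to introduce a spurious sign or a stray boundary term.
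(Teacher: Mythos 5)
Your proposal reproduces the paper's own argument essentially step for step: the same properties of $Q$ (conservative form, single conservation law, entropy dissipation, equilibria $f/M\in\mathrm{span}\{1\}$), the same integrated continuity equation for $(\rho^\epsilon,J_1^\epsilon,J_2^\epsilon)$, the same expansion $f^\epsilon=\rho^\epsilon M+\epsilon g^\epsilon$ with the ansatz $g^0=a\cdot\nabla_x\rho^0+b\cdot\mathcal{R}\rho^0+\cdots$ solved via Lax--Milgram, and the same Gaussian moment lemma on $\mathbb{R}^3\times T_n\mathbb{S}^2$ producing the projector $Id-n\otimes n$ in $J_2^0$. The subtleties you flag (coercivity of the bilinear form on the fibre, and the neutralization of $Id-n\otimes n$ against $\mathcal{R}\rho^0$, $\mathcal{R}U$ and $n\times\nabla_xu\cdot n$) are exactly the points the paper handles, so the proposal is correct and takes the same route.
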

\begin{Remark}
(i)
When the coefficient $D_t=0$, the equation (\ref{fp4}) is the Doi's
kinetic model for rod-like polymers with inertial forces ignored.
This equation is also called the Smouluchowski equation in literature. \\
(ii) If the translation diffusion has different strengths and $U$ also depends on $x$, one needs to change
$D_t\Delta_x \rho$ to
$$
\nabla_x\cdot\left\{[D_{\shortparallel}n\otimes n+D_{\perp}(Id-n\otimes n)]
[\nabla_x \rho +\frac{\rho}{k_BT}\nabla_x U]
\right\}.
$$
The above justification procedure remains valid.
\end{Remark}
\subsection{Properties of $Q$}
The operator $Q$ here acts on the augmented variables $(p, \omega)$
only and leaves other variables $(t, x, n)$ as parameters. Thus we
list some properties of $Q$ as an operator acting on functions of
$(p, \omega)$ only, which are essential in the derivation of the
limit equation.

Set
$$
M=\exp\left(-\frac{p^2 +\omega^2}{2k_BT} \right)
$$
with $\omega$ satisfying $\omega \cdot n=0$ for any fixed $n \in \mathbb{S}^2$.

This form enables us to conclude the following
\begin{Lemma}\label{qq}
The operator $Q$ has the following properties:\\
\begin{itemize}
\item [(i)]The operator $Q$ can be written as
\begin{equation}\label{qf}
Q(f):=\zeta_t k_BT\nabla_p\cdot \left(M\nabla_p\left( \frac{f}{M}
\right)\right)+\zeta_rk_BT\nabla_\omega\cdot
\left(M\nabla_\omega\left( \frac{f}{M}\right)\right).
\end{equation}
\item[(ii)] Conservation. For every $f\in L^1(dpd_n\omega)$,
 $$\int Q(f)dp d_n \omega=0.$$
 Moreover, this is the only such conservation laws. In other words,
for any $f\in L^1(dpd_n\omega)$,
$$ \int Q(f)\psi dpd_n\omega=0, \quad \forall f \Leftrightarrow
\psi \in {\rm span} \{1\}.
$$
\item[(iii)] Dissipation. There is a nonnegative function $\eta(f)$
that is an entropy for the operator $Q$. This means that for
$\eta(f) \in L^1(dpd_n\omega)$ we have
$$
D^Q(f)=\int Q(f) \eta'(f) dpd_n\omega \leq 0.
$$
In the case of rod-like polymers,  $\eta'(f)=f/M$ and the
dissipation production is
$$
-k_BT \left[ \zeta_t \int M\left(\nabla_p\left(\frac{f}{M}\right)
\right)^2dp d_n\omega +\zeta_r \int M
\nabla_\omega\left(\frac{f}{M}\right)^2 dpd_n\omega \right]\leq 0.
$$

\item[(iv)] Existence of equilibria.  From the above dissipation
property we see that the function $f(p, \omega)$ such that $Q(f)=0$
if and only if
$$\frac{f}{M}\in  {\rm span}\{1\}.$$

\end{itemize}
\end{Lemma}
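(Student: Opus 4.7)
The four properties of $Q$ are linked through (i), which exhibits $Q$ as the $M$-weighted Dirichlet divergence in the velocity variables, with $M = \exp(-(p^2+\omega^2)/(2k_BT))$. Once (i) is in hand, properties (ii)--(iv) reduce to standard integration by parts against the Gaussian weight.

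For (i), I would simply expand the proposed right-hand side. Since $\nabla_p M = -pM/(k_BT)$ and, restricted to $T_n\mathbb{S}^{d-1}$ (i.e.\ with $\nabla_\omega$ interpreted tangentially so that $n\cdot\nabla_\omega=0$), $\nabla_\omega M = -\omega M/(k_BT)$, a direct calculation yields
\begin{equation*}
k_BT\,M\nabla_p(f/M) = k_BT\,\nabla_p f + pf, \qquad k_BT\,M\nabla_\omega(f/M) = k_BT\,\nabla_\omega f + \omega f.
\end{equation*}
Multiplying by $\zeta_t$ (resp.\ $\zeta_r$) and taking the divergence reproduces the two summands of $Q$ in its original form.

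For (ii), conservation is immediate: the right-hand side of (i) is a pure divergence, the Gaussian weight ensures sufficient decay in $p$ at infinity, and the fibers $T_n\mathbb{S}^{d-1}\cong\mathbb{R}^{d-1}$ are flat and unbounded, contributing no boundary terms. For uniqueness, integration by parts gives
\begin{equation*}
\int Q(f)\,\psi\,dp\,d_n\omega = -k_BT\!\int\bigl[\zeta_t M\,\nabla_p(f/M)\cdot\nabla_p\psi + \zeta_r M\,\nabla_\omega(f/M)\cdot\nabla_\omega\psi\bigr]dp\,d_n\omega.
\end{equation*}
If this vanishes for every admissible $f$, the choice $f = M\psi$ collapses the right-hand side to $-k_BT\int[\zeta_t M|\nabla_p\psi|^2 + \zeta_r M|\nabla_\omega\psi|^2]=0$, forcing $\nabla_p\psi = 0$ and $\nabla_\omega\psi = 0$ pointwise; connectedness of $\mathbb{R}^d\times T_n\mathbb{S}^{d-1}$ then gives $\psi\in\mathrm{span}\{1\}$.

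For (iii), the same identity with the test function $\psi = f/M$ yields the displayed dissipation production, which is manifestly nonpositive. Property (iv) then follows at once: $Q(f)=0$ forces $D^Q(f)=0$, hence $\nabla_p(f/M)=\nabla_\omega(f/M)=0$ pointwise, so $f/M$ is constant by the same connectedness argument, while the converse is immediate since (i) annihilates constants. The one genuine subtlety is bookkeeping the tangent-bundle constraint $n\cdot\nabla_\omega = 0$ so that the integration by parts on $T_n\mathbb{S}^{d-1}$ is correctly performed as integration on a flat Euclidean fiber; no curvature corrections arise because $M$ depends only on $|\omega|^2$ and the fiber carries the induced flat metric.
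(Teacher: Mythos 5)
Your proposal is correct and follows essentially the same route as the paper: a direct expansion using $\nabla_p M=-pM/(k_BT)$ and $\nabla_\omega M=-\omega M/(k_BT)$ for (i), conservation from the divergence form, the substitution $f=M\psi$ combined with the sign of the resulting quadratic form for the uniqueness claim in (ii), integration by parts on the flat tangent fiber for (iii), and the chain $Q(f)=0\Rightarrow D^Q(f)=0\Rightarrow f/M$ constant for (iv). Your explicit bookkeeping of the constraint $n\cdot\nabla_\omega=0$ and of the absence of boundary terms on $T_n\mathbb{S}^{d-1}$ is slightly more careful than the paper's terse write-up, but it is the same argument.
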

\noindent {\bf Proof of Lemma \ref{qq}}. (i) The conservation
follows from a direct calculation. (ii) The conservation form in (i)
leads to the null integration. Since the $ \int Q(f)\psi
dpd_n\omega=0$ holds true for any $f$, we let $f=\psi M$ to have
$$
\int Q(\psi M)\psi dpd_n\omega=0.
$$
The left-hand side of this relation is non-positive as
$$
0=-k_BT \left[ \zeta_t \int M\left(\nabla_p \psi \right)^2dp
d_n\omega +\zeta_r \int M (\nabla_\omega\psi)^2dpd_n \omega
\right]\leq 0.
$$
Thus $\psi$ must satisfy both $\nabla_p \psi=0$ and $\nabla_\omega
\psi =0$. Then one has $n \cdot \nabla_{\omega} \psi=0$. These
ensure that $\psi$ must be independent of $(p, \omega)$.

(iii) Dissipation property comes from integration by parts on the
tangent plane.

(iv) From the dissipation inequality we see that if $Q(f)=0$, then
$D^Q(f)=0$. The non-negativity of $D^Q$ yields
$$
\frac{f}{M} \in {\rm span}\{1\}.
$$

\subsection{Asymptotic limit as $\epsilon \downarrow 0$ }
We now investigate the formal limit of the problem, assuming all
involved functions are smooth and convergence holds true as needed.

We suppose that $f \to f^0$ as $\epsilon \to 0$. Then, from the scaled kinetic equation (\ref{ske}), $Q[f]=O(\epsilon)$ and we deduce that $Q(f^0)=0$.

By property (ii) we have $f^0=\rho M$, with $\rho=\rho(t, x, n) \geq
0$ and $n\in \mathbb{S}^2$. Note that $Q$ acts only on $(p,
\omega)$, $\rho$ are functions of $(t, x, n)$.   To find this
dependence, we use the generalized collisional invariants. We
integrate the equation with respect to $(p, \omega)$ to find the
continuity equation
$$
\partial_t \rho^\epsilon +\nabla_x \cdot J_1^\epsilon +\mathcal{R} \cdot J_2^\epsilon=0, \quad \forall \epsilon>0,
$$
where the density and fluxes are defined by
\begin{align*}
\rho^\epsilon(t, x, n)& =\int f dpd_n\omega,\\
J_1^\epsilon(t, x, n)& =\frac{1}{\epsilon}\int p f dpd_n\omega,\\
J_2^\epsilon(t, x, n)& =\frac{1}{\epsilon}\int \omega f dpd_n\omega.
\end{align*}
Here the integration over $\omega$ is interpreted as for any fixed
$n$ with $(n, \omega)\in T\mathbb{S}^2$. In the limit $\epsilon \to
0$, $\rho^\epsilon \to C \rho^0$ and $J_i^\epsilon \to C J_i^0$
($i=1,2$) with $C=\int Mdpd_n\omega$, and we obtain
\begin{equation}\label{limit}
  \partial_t\rho^0 +\nabla_x \cdot J_1^0 +\mathcal{R} \cdot J_2^0=0.
\end{equation}
Thus we may assume that
$$
f^\epsilon=\rho^\epsilon(t, x, n)M(p, \omega) +\epsilon g^\epsilon(t, x, p, n, \omega),
$$
which leads to
$$
Q(f^\epsilon)=\epsilon Q(g^\epsilon).
$$
To determine the limiting flux we need to explore the limit of $g^\epsilon$. Using the kinetic equation (\ref{ske}) we have
\begin{align*}
Q(g^\epsilon) &=\frac{1}{\epsilon} Q(f^\epsilon)\\
& =\epsilon \partial_t f^\epsilon +
\nabla_x\cdot(pf^\epsilon)+\mathcal{R} \cdot(\omega f^\epsilon)+
\nabla_p\cdot(\zeta_t u(x)f^\epsilon)+ \nabla_\omega \cdot((\zeta_r
n\cdot \nabla_x
u-\mathcal{R }U)f^\epsilon)\\
& =\nabla_x\cdot(pM\rho^\epsilon )+ \mathcal{R }\cdot(\omega
M\rho^\epsilon)+
\nabla_p\cdot(\zeta_t u(x) \rho^\epsilon M)\\
& \qquad + \nabla_\omega \cdot((\zeta_r n\cdot \nabla_x u-\mathcal{R} U)M\rho^\epsilon)
+O(\epsilon).
\end{align*}
Let $g^\epsilon \to g^0$ as $\epsilon \to 0$, then it is clear that
$$
Q(g^0)=p\cdot \nabla_x \rho^0 M+ \omega \cdot \mathcal{R} \rho^0 M +\zeta_t
\rho^0 u(x) \cdot \nabla_p M + \rho^0 (\zeta_r n \times (\nabla_xu \cdot n) -\mathcal{R}U)\cdot
\nabla_\omega M.
$$
Again note that  $Q(f)$ is a linear operator, we may assume the
ansatz for $g^0$ as follows.
$$
g^0=a\cdot \nabla_x \rho^0 +b\cdot \mathcal{R }\rho^0+\zeta_t \rho^0
c\cdot u(x) +\rho^0 (\zeta_r  n\times \nabla_x u \cdot n -\mathcal{R}U)\cdot d.
$$
In comparison with the expression of $Q(g^0)$ above, we see that it
is sufficient to find $a, b, c, d$ such as
\begin{align*}
Q(a)& =pM, \\
Q(b)&=\omega M,\\
Q(c)&=\nabla_p M,\\
Q(d) &=\nabla_\omega M.
\end{align*}
In order to uniquely determine $\{a, b, c, d\}$  we state the
following
\begin{Lemma}\label{lem3.5} Let $g\in L^2(dpd_n\omega)$ such that $\int g
dpd_n\omega=0$. The problem
\begin{equation}\label{lax}
    Q(\psi)=g,
\end{equation}
has a unique weak solution in the space $H^1(dpd_n\omega)$.
\end{Lemma}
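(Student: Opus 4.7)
The plan is to apply the Lax--Milgram theorem to a suitable variational reformulation of $Q(\psi)=g$ in a weighted Hilbert space, with the equilibrium $M$ playing the role of a reference measure.

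First, using the conservative form in part~(i) of Lemma~\ref{qq}, I would make the substitution $\psi=M\varphi$ and rewrite the equation as
\[
k_BT\,\bigl[\zeta_t\nabla_p\cdot(M\nabla_p\varphi)+\zeta_r\nabla_\omega\cdot(M\nabla_\omega\varphi)\bigr]=g.
\]
Multiplying by a test function $\phi$ and integrating by parts on $(p,\omega)\in\mathbb{R}^3\times T_n\mathbb{S}^2$---the boundary terms vanish because $M$ decays as a Gaussian and the tangent plane at fixed $n$ is a flat $\mathbb{R}^2$---this produces the symmetric bilinear form
\[
a(\varphi,\phi)=k_BT\int M\bigl(\zeta_t\nabla_p\varphi\cdot\nabla_p\phi+\zeta_r\nabla_\omega\varphi\cdot\nabla_\omega\phi\bigr)\,dpd_n\omega
\]
and the linear form $\ell(\phi)=-\int g\phi\,dpd_n\omega$.

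The natural Hilbert space is
\[
V=\Bigl\{\varphi:\int M\bigl(|\varphi|^2+|\nabla_p\varphi|^2+|\nabla_\omega\varphi|^2\bigr)dpd_n\omega<\infty,\ \int\varphi\,M\,dpd_n\omega=0\Bigr\},
\]
endowed with the inner product $a(\cdot,\cdot)$. Continuity of $a$ on $V\times V$, and of $\ell$ on $V$, is immediate by Cauchy--Schwarz; the assumption $\int g\,dpd_n\omega=0$ lets me replace $\phi$ by $\phi-\bar\phi_M$ in $\ell$, which is precisely what keeps $\ell$ well defined on the quotient by $\ker Q=\mathrm{span}\{M\}$ identified in part~(iv) of Lemma~\ref{qq}. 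Coercivity reduces to the Gaussian Poincar\'e (spectral gap) inequality
\[
\int|\varphi|^2 M\,dpd_n\omega\le C\int M\bigl(|\nabla_p\varphi|^2+|\nabla_\omega\varphi|^2\bigr)dpd_n\omega
\]
for all $\varphi\in V$, which holds because $M\,dpd_n\omega$ is a product of standard Gaussians on $\mathbb{R}^3_p$ and on the $2$-dimensional flat tangent space $T_n\mathbb{S}^2_\omega$: each factor satisfies the classical Gaussian Poincar\'e inequality with an explicit constant, and their tensor product yields the joint inequality.

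Lax--Milgram then delivers a unique $\varphi\in V$ with $a(\varphi,\phi)=\ell(\phi)$ for every $\phi\in V$, and setting $\psi=M\varphi$ produces a unique weak solution of $Q(\psi)=g$ in the class prescribed by the lemma. The main obstacle I anticipate is handling the $\omega$-variable cleanly: one has to verify that integration by parts against the intrinsic gradient $\nabla_\omega$ on $T_n\mathbb{S}^2$ works without extra boundary or curvature terms, and that the Poincar\'e constant can be chosen uniformly in the parameter $n$. Both points are resolved by identifying $T_n\mathbb{S}^2$ isometrically with $\mathbb{R}^2$ for each fixed $n$, under which the restriction of $M$ becomes a standard $2$D Gaussian; the remainder of the argument is then routine.
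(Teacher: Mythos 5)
Your proposal follows essentially the same route as the paper: both reduce $Q(\psi)=g$ to a variational problem via the conservative form of $Q$ and invoke Lax--Milgram, with the zero average of $g$ making the right-hand side well defined on the quotient by $\ker Q$. The one substantive difference is in how coercivity is obtained. The paper works with the bilinear form $k_BT\int M\left[\zeta_t\nabla_p\psi\cdot\nabla_p\phi+\zeta_r\nabla_\omega\psi\cdot\nabla_\omega\phi\right]dpd_n\omega$ on $H^1(dpd_n\omega)$ and asserts that $M$ is ``bounded from above and below'' on $\mathbb{R}^d_p\times T_n(\mathbb{S}^2)$; since $M$ is a Gaussian on an unbounded domain, that assertion cannot be taken literally, and as written the form is neither coercive on unweighted $H^1$ nor controlled without a mean-zero constraint. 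Your version repairs exactly this: by passing to $\varphi=\psi/M$, working in the $M$-weighted space with $\int\varphi M\,dpd_n\omega=0$, and invoking the Gaussian Poincar\'e (spectral gap) inequality on $\mathbb{R}^3_p\times T_n\mathbb{S}^2\cong\mathbb{R}^3\times\mathbb{R}^2$, you get genuine coercivity, uniformly in the parameter $n$ by the isometric identification you describe. So your argument is correct and, on the coercivity point, more careful than the paper's; the only caveat is that uniqueness holds in the normalized class $\int\psi\,dpd_n\omega=0$ (equivalently modulo $\mathrm{span}\{M\}$), which is the normalization the paper actually uses when it solves for $a,b,c,d$, rather than in all of $H^1(dpd_n\omega)$ as the lemma's statement literally reads.
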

\begin{proof}For each fixed $n\in \mathbb{S}^2$,
we apply the Lax-Milgram theorem to the following variational
formulation of (\ref{lax}):
\begin{equation}\label{var}
k_BT\int \left[\xi_t M\nabla_p \psi \cdot \nabla_p \phi +\xi_r M
\nabla_\omega\psi \cdot  \nabla_\omega \phi \right]dpd_n\omega=\int
g\phi dpd_n\omega
\end{equation}
for all $\phi\in H^1(dpd_n\omega)$. The function $M$ is bounded from
above and below on $\mathbb{R}_p^d \times T_n(\mathbb{S}^2)$, so the
bilinear form at the left-hand side is continuous and coercive on
$H^1(dpd_n\omega)$. The right-hand side is a continuous linear form
on $H^1(dpd_n\omega)$ due to the zero average of $g$ over
$\mathbb{R}_p^d \times T_n(\mathbb{S}^2)$.
\end{proof}

Lemma \ref{lem3.5}  ensures that there exists unique $a, b, c, d$ in
the space $\{\phi|\int \phi dpd_n\omega =0\}$ since
$$
\int \{pM, \omega M, \nabla_p M, \nabla_\omega M\}dpd_n\omega=0.
$$
Note also that from
$$
\nabla_p M= -\frac{1}{k_BT}pM, \quad \nabla_\omega M= -\frac{1}{k_BT}\omega M,
$$
it follows
\begin{equation}\label{cd}
    c=-\frac{a}{k_BT}, \quad d=-\frac{b}{k_BT}.
\end{equation}
It remains to determine only $a$ and $b$. Let $a=\alpha Mp$ we have
$$
Q(a)=-\alpha \zeta_t pM=pM,
$$
which gives $\alpha =-\frac{1}{\zeta_t}$, leading to
\begin{equation}\label{a}
a=-\frac{1}{\zeta_t}pM.
\end{equation}
Similarly let $b=\beta M\omega$, then
\begin{align*}
Q(b) &= \zeta_rk_BT\nabla_\omega \cdot (\beta M)\\
&=\beta \zeta_rk_BT \nabla_\omega M \\
&=-\beta \zeta_r  M \omega,
\end{align*}
which together with $Q(b)=M\omega$ gives $\beta=-1/\zeta_r$, thus
\begin{equation}\label{b}
b=-\frac{1}{\zeta_r} \omega M.
\end{equation}
It is straightforward to verify that
$$
\int \{b, d\}\cdot pdpd_n\omega =0, \quad \int \{a, c\}\cdot \omega dp d_n\omega=0.
$$
These enable us to determine the asymptotic limits of the fluxes.
\begin{align*}
J_1^0 &= \int pg^0 dpd_n\omega \\
& = \int (a\cdot\nabla_x \rho^0) p  +\int (b\cdot R \rho^0 )p +
\zeta_t  \rho^0 \int (c \cdot u(x)) p
+\rho^0 \int d\cdot (\zeta_r n\cdot  \nabla_x u-\mathcal{R}U )p \\
& =\int a\cdot \left(\nabla_x \rho^0- \frac{\zeta_t u(x)\rho^0}{k_B T} \right) p dpd_n\omega,
\end{align*}
\begin{align*}
J_2^0 &= \int \omega g^0 dpd_n\omega \\
& = \int (a\cdot \nabla_x \rho^0)\omega  +\int b\cdot R \rho^0  \omega
+ \zeta_t \rho^0 \int (c \cdot u(x)) \omega
+\rho^0 \int d\cdot (\zeta_r n\cdot \nabla_x u-\mathcal{R}U) \omega\\
& =\int b \cdot\left(\mathcal{R} \rho^0- \frac{\rho^0}{k_B T}(\zeta_r n\cdot \nabla_x u-\mathcal{R}U) \right) \omega dpd_n\omega.
\end{align*}
In order to further simplify the above fluxes, we state the
following lemma:
\begin{Lemma}\label{jj}
Consider the space $p\in \mathbb{R}^3$ and $(n, \omega)\in
T\mathbb{S}^2$.
For any vector $A$ we have
\begin{align}
\int M(p\cdot A)pdpd_n\omega &= k_BT A\int M dp d_n\omega,\\
\int M(\omega \cdot A)\omega dp d_n\omega &= k_BT (Id-n\otimes n)A\int Mdp d_n\omega.
\end{align}
\end{Lemma}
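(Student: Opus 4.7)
The Maxwellian factorizes as $M(p,\omega)=M_p(p)\,M_\omega(\omega)$ with
$M_p=\exp(-p^2/(2k_BT))$ and $M_\omega=\exp(-\omega^2/(2k_BT))$, so in both identities the integrals over $p$ and over the tangent plane $T_n\mathbb{S}^2$ decouple. The plan is therefore to reduce each identity to a standard Gaussian second-moment computation, performed on $\mathbb{R}^3$ for the first identity and on the two-dimensional Euclidean space $T_n\mathbb{S}^2$ for the second.

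For the first identity, I would integrate by parts using the relation $p_j M_p=-k_BT\,\partial_{p_j}M_p$: for each pair of indices $i,j$,
\begin{equation*}
\int_{\mathbb{R}^3} M_p\,p_ip_j\,dp=-k_BT\int_{\mathbb{R}^3} p_i\,\partial_{p_j}M_p\,dp=k_BT\,\delta_{ij}\int_{\mathbb{R}^3} M_p\,dp,
\end{equation*}
after an integration by parts with vanishing boundary terms (Gaussian decay). Contracting against $A_j$ and multiplying by $\int M_\omega\,d_n\omega$ yields the first identity.

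For the second identity, I would fix $n\in\mathbb{S}^2$ and choose an orthonormal basis $\{e_1(n),e_2(n)\}$ of $T_n\mathbb{S}^2$, so that $\omega=\omega_1 e_1+\omega_2 e_2$ and $d_n\omega=d\omega_1\,d\omega_2$. The projector onto the tangent plane can be written as $\sum_{i=1}^{2}e_i\otimes e_i=Id-n\otimes n$, which is the key coordinate-free identity that will produce the $(Id-n\otimes n)$ factor at the end. The same integration by parts as above now on $\mathbb{R}^2$ gives
\begin{equation*}
\int_{T_n\mathbb{S}^2}M_\omega\,\omega_i\omega_j\,d_n\omega=k_BT\,\delta_{ij}\int_{T_n\mathbb{S}^2}M_\omega\,d_n\omega,\qquad i,j\in\{1,2\}.
\end{equation*}
Writing $\omega\cdot A=\sum_i(A\cdot e_i)\omega_i$ and $\omega=\sum_j\omega_j e_j$, assembling components and multiplying by $\int M_p\,dp$ yields
\begin{equation*}
\int M(\omega\cdot A)\omega\,dp\,d_n\omega=k_BT\Bigl(\sum_{i=1}^{2}(A\cdot e_i)e_i\Bigr)\int M\,dp\,d_n\omega=k_BT\,(Id-n\otimes n)A\int M\,dp\,d_n\omega.
\end{equation*}

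The only non-routine point is the second identity, since one must be careful that the integration is over the tangent plane and that the resulting second-moment tensor is the tangential projector rather than the full identity; this is exactly where the constraint $\omega\cdot n=0$ enters and forces the $(Id-n\otimes n)$ factor. Apart from that, the argument is a direct Gaussian moment computation and carries no further difficulty.
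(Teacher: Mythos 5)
Your argument is correct and is essentially the paper's own proof: both reduce the second identity to a two-dimensional Gaussian second-moment computation in the tangent plane and identify the resulting tensor with the projector $Id-n\otimes n$, the only cosmetic difference being that the paper realizes the tangent-plane coordinates via an explicit rotation $K\in SO(3)$ with $n=Ke_3$, whereas you use an abstract orthonormal basis $\{e_1(n),e_2(n)\}$ and the identity $\sum_i e_i\otimes e_i=Id-n\otimes n$. The first identity is handled in both cases as a routine full-space Gaussian moment, so there is nothing to add.
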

Apply this lemma to the above expressions to obtain
\begin{align}
J_1^0&= [\rho^0 u(x)-\frac{k_BT}{\zeta_t}\nabla_x \rho^0]\int Mdp d_n\omega,\\
J_2^0 &=(Id-n\otimes n)\left[-\frac{k_BT}{\zeta_r} \mathcal{R}\rho^0 +\frac{1}{\zeta_r}  \left( \
\zeta_r n\times \nabla_x u\cdot n-\mathcal{R}U \right)\right]\int Mdp d_n\omega.
\end{align}
A simple calculation shows for arbitrary $A$ that
$$
(Id-n\otimes n)(n\times A)=n\times A.
$$
Therefore
$$
J_2^0=\left[-\frac{k_BT}{\zeta_r} \mathcal{R}\rho^0 + \left( \
 n\times \nabla_x u\cdot n-\frac{1}{\zeta_r}  \mathcal{R}U \right)\right]\int Mdp d_n\omega.
$$
Now the limiting equation (\ref{limit}) divided  by $C=\int Mdpd_n\omega$  reduces to
$$
\partial_t \rho^0 + \nabla_x\cdot(u(x)\rho^0) +\mathcal{R}\cdot ((n\times  \nabla_x u \cdot n-\frac{\mathcal{R}U}{\zeta_r})\rho^0)=
\frac{k_BT}{\zeta_t}\Delta_x \rho^0 +\frac{k_BT}{\zeta_r}\mathcal{R}\cdot \mathcal{R} \rho^0.
$$
Regrouping with $\rho^0$ replaced by $\psi$ leads to
$$
\partial_t \psi +\nabla_x\cdot(u(x)\psi) +\mathcal{R}\cdot(n\times \nabla_xu \cdot \psi)=D_t\Delta_x \psi +D_r \mathcal{R}\cdot \left[\mathcal{R}\psi +\frac{\psi}{k_B T}\mathcal{R}U \right],
$$
where
$$
D_t=\frac{k_BT}{\zeta_t}, \quad D_r=\frac{k_BT}{\zeta_r}.
$$

This is exactly the equation derived for rod-like polymers with no inertial force. \\

\noindent {\bf Proof of Lemma \ref{jj}}.  The integration is over
$\omega$ from a tangent bundle with $(n, \omega)\in T \mathbb{S}^2$.
We fix $n\in \mathbb{S}^2$ arbitrarily, it suffices to evaluate
$$
\int _{ n\cdot \omega=0} e^{-\omega^2/a}(A\cdot \omega)\omega d_n\omega=\frac{a}{2}(Id-n\otimes n)A \int _{n\cdot \omega=0} e^{-\omega^2/a}d_n\omega.
$$
Let $K$ be a rotational operator in $\mathbb{R}^3$ such that
$$
n=Ke_3, \quad K\in SO(3),
$$
and use transform $\omega=K \omega'$, satisfying
$|\omega|^2=|\omega'|^2$.

Let $A=KB$, the transformation gives
\begin{align*}
\int _{ n\cdot \omega=0} e^{-\omega^2/a}(A\cdot \omega)\omega d_n\omega &=
\int _{ \omega_3'=0} e^{-(\omega')^2/a}(A\cdot K\omega')K\omega' d\omega' \\
& =K\int_{\omega_3'=0} e^{-((\omega_1')^2+(\omega_2')^2)/a}(b_1\omega_1'+b_2\omega_2')\omega'd\omega_1'd\omega_2'\\
&=\frac{a}{2}K\left(
                \begin{array}{c}
                  b_1 \\
                  b_2 \\
                  0 \\
                \end{array}
              \right)
\int_{\omega_3'=0} e^{-((\omega_1')^2+(\omega_2')^2)/a}d\omega_1'd\omega_2' \\
&=\frac{a}{2}K (B-e_3b_3) \int _{n\cdot \omega=0}
e^{-\omega^2/a}d_n\omega
\\
&=\frac{a}{2}(Id-n\otimes n)A \int _{n\cdot \omega=0} e^{-\omega^2/a}d_n\omega.
\end{align*}
Here we have used the fact that
$$e_3b_3=K^{-1}nb_3=ne_3^\top K^TA=K^{-1}n\otimes n A.
$$
\section{Concluding remarks}
In this work we have derived novel kinetic equations for
Dumbbell-like polymers as well as rod-like polymers. Inertial forces
are taken care of by an augmented environment in an extended configuration space. In the
case of rod-like polymers, the augmented space for orientation is
just a tangent bundle of the usual sphere. We have also shown that
the formal limit of the augmented equation recovers the usual
inertia-free kinetic models explored in literature.

\bigskip
\section*{Acknowledgments}
This research was partially supported by the National Science Foundation under the Kinetic FRG grant DMS07-57227 and by the Marie Curie Actions of the European Commission in the frame of the DEASE project (MEST-CT-2005-021122).


\end{document}